\documentclass[11pt]{article}

\usepackage{latexsym}
\usepackage{amssymb}
\usepackage{amsthm}
\usepackage{amscd}
\usepackage{amsmath}
\usepackage[all]{xy}
\usepackage{graphicx}

\newcounter{casecount}

\newtheorem{theorem}{Theorem}[section]

\newtheorem{case}[casecount]{Case}

\newtheorem{lemma}{Lemma}[section]

\newtheorem{proposition}{Proposition}[section]
\newtheorem{corollary}{Corollary}[section]

\xyoption{dvips}

\CompileMatrices

\setlength{\evensidemargin}{1in}
\addtolength{\evensidemargin}{-1in}
\setlength{\oddsidemargin}{1.5in}
\addtolength{\oddsidemargin}{-1.5in}
\setlength{\topmargin}{1in}
\addtolength{\topmargin}{-1.5in}

\setlength{\textwidth}{16cm}
\setlength{\textheight}{23cm}

\numberwithin{equation}{section}

\newcommand{\FF}{\mathbb{F}}    \newcommand{\CC}{\mathbb{C}}    
 \def\dim{\mathrm{dim}}  
 \def\ZZ{\mathbb{Z}} 
       \def\Res{\mathrm{Res}} \def\dd{\displaystyle}   
  
  \def\wt{\mathrm{wt}}
  
  \def\sss{\scriptscriptstyle} \def\ss{\scriptstyle}
\newcommand{\cT}{\mathcal{T}}

\newcommand{\rank}{\mathrm{rank}}

\newcommand{\fkn}{\mathfrak{n}}
\newcommand{\supp}{\mathrm{supp}}

\newcommand{\cP}{\mathcal{P}}

\newcommand{\cS}{\mathcal{S}}
\newcommand{\Cols}{\mathrm{Cols}}
\newcommand{\Rows}{\mathrm{Rows}}
\newcommand{\One}{{1\hspace{-.14cm} 1}}

\newcommand{\Null}{\mathrm{Null}}

\newcommand{\cZ}{\mathcal{Z}}
\newcommand{\cend}{\mathrm{lc}}
\newcommand{\br}{\mathrm{br}}
\newcommand{\larc}[1]{\hspace{-.4ex}\overset{#1}{\frown}\hspace{-.4ex}}
\newcommand{\bag}{\mathrm{bag}}
\newcommand{\lc}{\mathrm{lc}}

\makeatletter
\renewcommand{\@makefnmark}{\mbox{\textsuperscript{}}}
\makeatother

\allowdisplaybreaks[1]

\UseCrayolaColors

\def\adots{\mathinner{\mkern2mu\raise0pt\hbox{.}  
\mkern2mu\raise4pt\hbox{.}\mkern1mu
\raise7pt\vbox{\kern7pt\hbox{.}}\mkern1mu}}

\begin{document}

\title{Restricting supercharacters of the finite group\\ of unipotent uppertriangular matrices}
\author{Nathaniel Thiem\footnote{University of Colorado at Boulder: \textsf{thiemn@colorado.edu}}{ } and Vidya Venkateswaran\footnote{University of Chicago: \textsf{vidyav@math.uchicago.edu}}}
\date{}

\maketitle

\abstract{It is well-known that the representation theory of the finite group of unipotent 
upper-triangular matrices $U_n$ over a finite field is a wild problem.  By instead 
considering approximately irreducible representations (supercharacters), one obtains a rich combinatorial 
theory analogous to that of the symmetric group, where we replace partition combinatorics 
with set-partitions.  This paper studies the supercharacter theory of a family of subgroups that interpolate between $U_{n-1}$ and $U_n$.  We supply several combinatorial indexing sets for the supercharacters, supercharacter formulas for these indexing sets, and a combinatorial rule for restricting supercharacters from one group to another.  A consequence of this analysis is a Pieri-like restriction rule from $U_n$ to $U_{n-1}$ that can be described on set-partitions (analogous to the corresponding symmetric group rule on partitions). }

\section{Introduction}

The representation theory of the finite group of upper-triangular matrices $U_n$ is  a well-known wild problem.  Therefore, it came as somewhat of a surprise when C. Andr\'e was able to show that by merely ``clumping" together some of the conjugacy classes and some of the irreducible representations one attains a workable approximation to the representation theory of $U_n$ \cite{An95,An99,An01,An02}. In his Ph.D. thesis \cite{Ya01}, N. Yan showed how the algebraic geometry of the original construction could be replaced by more elementary constructions.   E. Arias-Castro, P. Diaconis, and R. Stanley \cite{ADS04} were then able to demonstrate that this theory can in fact be used to study random walks on $U_n$ using techniques that traditionally required the knowledge of the full character theory \cite{DS93}.  Thus, the approximation is fine enough to be useful, but coarse enough to be computable.  Furthermore, this approximation has a remarkable combinatorial structure analogous to that of the symmetric group, where we replace partitions with set-partitions,
$$\left\{\begin{array}{c} \text{Almost irreducible}\\ \text{representations of $U_n$}\end{array}\right\} \longleftrightarrow \left\{\begin{array}{c} \text{Labeled set partitions}\\ \text{of $\{1,2,\ldots,n\}$}\end{array}\right\}.$$
 One of the main results of this paper is to extend the analogy with the symmetric group by giving a combinatorial Pieri-like formula for set-partitions that corresponds to restriction in $U_n$.  

In \cite{DI06}, P. Diaconis and M. Isaacs generalized this approximating approach to develop a the concept of a \emph{supercharacter theory} for all finite groups, where irreducible characters are replaced by supercharacters and conjugacy classes are replaced by superclasses.  In particular, their paper generalized Andr\'e's original construction by giving an example of a supercharacter theory for a family of groups called algebra groups.   For this family of groups, they show that supercharacters restrict to $\ZZ_{\geq 0}$-linear combination of supercharacters, tensor products of supercharacters are $\ZZ_{\geq 0}$-linear combinations of supercharacters, and they develop a notion of superinduction that is the adjoint functor to restriction for supercharacters.   This paper uses a family of subgroups that interpolate between $U_n$ and $U_{n-1}$ to explicitly decompose restricted supercharacters from $U_n$ to $U_{n-1}$.

Section 2 reviews the basics of supercharacter theory and pattern groups.  Section 3 defines the interpolating subgroups  $U_{(m)}$, and finds two different sets of natural superclass and supercharacter representatives, which we call comb representatives and path representatives.  Section 4 uses a general character formula from \cite{DT07} to determine character formulas for both comb and path representatives.  The character formula for comb representatives -- Theorem \ref{TriangleCharacterFormula} -- is easier to compute directly, but the path representative character formula -- Theorem \ref{PathCharacterFormula} -- has a more pleasing combinatorial structure.    Section 5 uses the character formulas to derive a restriction rule for the interpolating subgroups given in Theorem \ref{InterpolatingRestriction}.   Corollary \ref{URestrictionRule} iterates these restrictions to deduce a recursive decomposition formula for the restriction from $U_n$ to $U_{n-1}$.  

This paper is the companion paper to \cite{MT07}, which studies the superinduction of supercharacters.  Other work related to supercharacter theory of unipotent groups, include C. Andr\'e and A. Neto's  exploration of supercharacter theories for unipotent groups of Lie types $B$, $C$, and $D$ \cite{AN06}, C. Andr\'e and  A. Nicol\'as' analysis of supertheories over other rings \cite{AnNi06}, and an intriguing possible connection between supercharacter theories and  Boyarchenko and Drinfeld's work on $L$-packets \cite{BD06}.

\subsubsection*{Acknowledgements}

Part of this work is Venkateswaran's honors thesis at Stanford University.  We would like to thank Diaconis and Marberg for many enlightening discussions regarding this work.

\section{Preliminaries}

This section gives reviews several topics fundamental to our main results: Supercharacter theories, pattern groups, and a character formula for pattern groups.

\subsection{Supertheories}

 Let $G$ be a group.  A \emph{supercharacter theory} for $G$ is a partition $\cS^\vee$ of the elements of $G$ and a set of characters $\cS$, such that
\begin{enumerate}
\item[(a)] $|\cS|=|\cS^\vee|$,
\item[(b)] Each $S\in \cS^\vee$ is a union of conjugacy classes,
\item[(c)] For each irreducible character $\gamma$ of $G$, there exists a unique $\chi\in \cS$ such that
$$\langle \gamma, \chi\rangle>0,$$
where $\langle,\rangle$ is the usual innerproduct on class functions,
\item[(d)] Every $\chi\in \cS$ is constant on the elements of $\cS^\vee$.
\end{enumerate}
We call $\cS^\vee$ the set of \emph{superclasses} and $\cS$ the set of \emph{supercharacters}.  Note that every group has two trivial supercharacter theories -- the usual character theory and the supercharacter theory with $\cS^\vee=\{\{1\},G\setminus\{1\}\}$ and $\cS=\{\One,\gamma_G-\One\}$, where $\One$ is the trivial character of $G$ and $\gamma_G$ is the regular character.

There are many ways to construct supercharacter theories, but this paper will study a particular version developed in \cite{DI06} to generalize Andr\'e's original construction to a larger family of groups called algebra groups.

\subsection{Pattern groups}

While many results can be stated in the generality of algebra groups, frequently statements become simpler if we restrict our attention to a subfamily called pattern groups.

Let $U_n$ denote the set of $n\times n$ unipotent upper-triangular matrices with entries in the finite field $\FF_q$ of $q$ elements.   For any poset $\cP$ on the set $\{1,2,\ldots, n\}$, the pattern group $U_\cP$ is given by
$$U_\cP=\{u\in U_n\ \mid\ u_{ij}\neq 0\text { implies $i<j$ in $\cP$}\}.$$
This family of groups includes unipotent radicals of rational parabolic subgroups of the finite general linear groups, and $U_n$ is the pattern group corresponding to the total order $1<2<3<\cdots<n$.  

The group $U_\cP$ acts on the $\FF_q$-algebra
$$\fkn_\cP=\{u-1\ \mid\ u\in U_\cP\}$$
by left and right multiplication.  Two elements $u,v\in U_\cP$ are in the same \emph{superclass} if $u-1$ and $v-1$ are in the same two-sided orbit of $\fkn_\cP$.  Note that since every element of $U_\cP$ can be decomposed as a product of elementary matrices, every element in the orbit containing $v-1\in\fkn_\cP$ can be obtained by applying a sequence of the following row and column operations.
\begin{enumerate}
\item[(a)] A scalar multiple of row $j$ may be added to row $i$ if $j>i$ in $\cP$,
\item[(b)] A scalar multiple of column $k$ may be added to column $l$ if $k<l$ in $\cP$.
\end{enumerate}

There are also left and right actions of $U_\cP$ on the dual space
$$\fkn_\cP^*=\{\lambda:\fkn_\cP\rightarrow \FF_q\ \mid\ \lambda\text{ $\FF_q$-linear}\},$$
given by
$$(u\lambda v)(x-1)=\lambda(x^{-1}(x-1)y^{-1}), \qquad\text{where $\lambda\in \fkn_\cP^*$, $u,v,x\in U_\cP$.}$$
Fix a nontrivial group homomorphism $\theta:\FF_q^+\rightarrow \CC^\times$.  The \emph{supercharacter} $\chi^\lambda$ with representative $\lambda\in \fkn_\cP^*$ is
$$\chi^\lambda=\frac{|U_\cP \lambda|}{|U_\cP \lambda U_\cP|}\sum_{\mu\in U_\cP \lambda U_\cP} \theta\circ(-\mu).$$
We many identify  the functions $\lambda\in \fkn_\cP^*$ with matrices by the convention
$$\lambda_{ij}=\lambda(e_{ij}),\qquad \text{where $e_{ij}\in \fkn_\cP$ has $(i,j)$ entry 1 and zeroes elsewhere.}$$
Then, as with superclasses, every element in the orbit containing $\lambda\in\fkn_\cP^*$ can be obtained by applying a sequence of the following row and column operations.
\begin{enumerate}
\item[(a)] A scalar multiple of row $i$ may be added to row $j$ if $i<j$ in $\cP$,
\item[(b)] A scalar multiple of column $l$ may be added to column $k$ if $l>k$ in $\cP$.
\end{enumerate}
Note that we ignore (or set to zero) all nonzero elements that might occur through these operations that are not in allowable in $\fkn_\cP^*$.  For example, if $i$ is incomparable to $j$ in $\cP$ and a row operation would cause $\lambda_{ij}\neq 0$, we carry through the operation as usual, and retroactively set $\lambda_{ij}$ back to zero (since $\lambda$ does not ``see" these entries).

\vspace{.25cm}

\noindent \textbf{Example.}  For $U_n$ we have
\begin{equation} \label{UOrbitRepresentatives}
\left\{\begin{array}{c} \text{Superclasses}\\ \text{of $U_n$}\end{array}\right\}\longleftrightarrow 
\left\{u\in U_n\ \big|\ \begin{array}{c} \text{$u-1$ has at most one nonzero}\\ \text{ element in every and column}\end{array}\right\}
\end{equation}
Note that if $q=2$, then 
$$\left\{u\in U_n\ \big|\ \begin{array}{c} \text{$u-1$ has at most one nonzero}\\ \text{ element in every and column}\end{array}\right\} \longleftrightarrow \left\{\begin{array}{c}\text{Set partitions}\\ \text{of $\{1,2,\ldots,n\}$}\end{array}\right\}.$$
Similarly, if 
\begin{align*}
\fkn_n&=U_n-1\\
\fkn_n^*&= \{\lambda:\fkn_n\rightarrow \FF_q\ \mid\ \text{$\lambda$ $\FF_q$-linear}\}.
\end{align*}
then
\begin{equation} \label{UcoOrbitRepresentatives}
\left\{\begin{array}{c} \text{Supercharacters}\\ \text{of $U_n$}\end{array}\right\}\longleftrightarrow 
\left\{\lambda\in \fkn_n^*\ \big|\ \begin{array}{c} \text{The matrix $\lambda$ has at most one non-}\\ \text{zero element in every and column}\end{array}\right\}
\end{equation}
Let
\begin{equation}
\cS_n(q)=\{\lambda\in \fkn_n^*\ \mid\ \text{$\lambda$ has at most one nonzero element in every and column}\}.
\end{equation}

\subsection{A supercharacter formula for pattern groups}

Let $U_\cP$ be a pattern group with corresponding nilpotent algebra $\fkn_\cP$.   Let
$$J=\{(i,j)\ \mid\ i<j \text{ in $\cP$}\}.$$
Given $u\in U_\cP$ and $\lambda\in \fkn_\cP^*$, define $a,b\in \FF_q^{|J|}$ by
\begin{align*}
a_{ij}&=\sum_{j<k \text{ in $\cP$}} u_{jk}\lambda_{ik}, \qquad \text{for $(i,j)\in J$,}\\
b_{jk}&=\sum_{i<j\text{ in $\cP$}} u_{ij}\lambda_{ik}, \qquad \text{for $(j,k)\in J$.}
\end{align*}
Let $M$ be the $|J|\times |J|$ matrix given by
$$M_{ij,kl} =\left\{\begin{array}{ll} u_{jk}\lambda_{il}, & \text{if $i<j<k<l$ in $\cP$,}\\ 0, & \text{otherwise}.\end{array}\right., \qquad \text{for $(i,j),(k,l)\in J$}.$$
Informally, if one superimposes the matrices $u$ and $\lambda$, then
\begin{align*}
a \qquad &\text{tracks occurrences of}\qquad \begin{array}{|@{}c@{}|} \hline \lambda_{jk} \\ \\ u_{ik}\\ \hline\end{array}\\
b \qquad &\text{tracks occurrences of} \qquad \begin{array}{|@{}ccc@{}|} \hline u_{ij} & & \lambda_{ik}\\ \hline\end{array}\\
M \qquad &\text{tracks occurrences of} \qquad \begin{array}{|@{}ccc@{}|} \hline  & & \lambda_{il}\\ u_{jk} & & \\ \hline\end{array}\\
\end{align*}

The following theorem gives a general supercharacter formula for pattern groups.  However, to properly use the theorem we will need to choose appropriate superclass and supercharacter representatives.

\begin{theorem}[\cite{DT07}] \label{GeneralCharacterFormula} Let $u\in U_\cP$ and $\lambda\in \fkn_\cP^*$.  Then
\begin{enumerate}
\item[(a)] The character
$$\chi^\lambda(u)=0$$
unless there exists $x\in \FF_q^{|J|}$ such that $Mx=-a$ and $b\perp \Null(M)$,
\item[(b)] If $\chi^\lambda(u)$ is not zero, then
$$\chi^\lambda(u)=\frac{q^{|U_\cP \lambda|}}{q^{\rank(M)}} \theta(x\cdot b)\theta\circ\lambda(u-1),$$
where $\cdot$ is the usual inner product (dot product) on $\FF_q^{|J|}$.
\end{enumerate}
\end{theorem}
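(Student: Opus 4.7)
The plan is to unfold $\chi^\lambda(u)$ into a double exponential sum over $U_\cP\times U_\cP$, recognize the exponent as an explicit quadratic form in the entries of $v-1$ and $w-1$, and then evaluate by additive character orthogonality on $\FF_q$.

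First, since every $\mu\in U_\cP\lambda U_\cP$ has exactly $|U_\cP|^2/|U_\cP\lambda U_\cP|$ preimages under $(v,w)\mapsto v\lambda w$, the supercharacter definition collapses to
\[
\chi^\lambda(u)\;=\;\frac{|U_\cP\lambda|}{|U_\cP|^2}\sum_{v,w\in U_\cP}\theta\bigl(-\lambda(v^{-1}(u-1)w^{-1})\bigr).
\]
Applying the bijections $v\mapsto v^{-1}$ and $w\mapsto w^{-1}$ on $U_\cP$ replaces the argument by $\lambda(v(u-1)w)$.

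Second, parametrize $v=1+V$ and $w=1+W$ with $V,W\in \fkn_\cP$. The identity
\[
v(u-1)w\;=\;(u-1)+V(u-1)+(u-1)W+V(u-1)W
\]
is exact (no higher-order terms), so applying $\lambda$ and re-indexing by matrix entries — noting that only pairs in $J$ contribute because $\lambda\in \fkn_\cP^*$ — yields
\[
\lambda(v(u-1)w)\;=\;\lambda(u-1)+V\cdot a+W\cdot b+V^{T}MW,
\]
where $a$, $b$, $M$ are precisely the vectors and matrix defined just before the theorem; the pattern-poset condition on $\cP$ cuts the bilinear piece down to quadruples with $i<j<k<l$ in $\cP$.

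Third, carry out the Gaussian sum. Pull out the constant $\theta(-\lambda(u-1))$, collect the $V$-dependence as $\theta(-V\cdot(a+MW))$, and apply additive orthogonality on $\FF_q^{|J|}$:
\[
\sum_{V\in\fkn_\cP}\theta\bigl(-V\cdot(a+MW)\bigr)\;=\;\begin{cases} q^{|J|} & \text{if } MW=-a,\\ 0 & \text{otherwise.}\end{cases}
\]
If no $x$ solves $Mx=-a$ this is identically zero, proving part (a). Otherwise, fix one solution $x$; the solution set is $x+\Null(M)$, and summing the residual $\theta(-W\cdot b)$ over this affine subspace vanishes unless $b\perp\Null(M)$ and otherwise equals $|\Null(M)|\,\theta(-x\cdot b)$. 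Using $|U_\cP|=q^{|J|}$ and $|\Null(M)|=q^{|J|-\rank(M)}$, the prefactors collapse to $|U_\cP\lambda|/q^{\rank(M)}$, which (after accounting for the sign conventions in $\theta$) is the formula in part (b).

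The main obstacle is step two: verifying entry by entry that the linear and bilinear pieces of $\lambda(v(u-1)w)$ really package into the $a$, $b$, and $M$ of the theorem. One must carefully track how the diagonal contributions $u_{jj}=1$ cancel against the $-1$ in $u-1$, and how the poset structure of $\cP$ forces each entrywise sum to be supported on indices in $J$. Once the exponent is in this clean quadratic form, parts (a) and (b) drop out of standard $\FF_q$-orthogonality and rank-nullity.
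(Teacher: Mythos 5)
Your argument is correct, and it is worth noting that the paper you are working from does not actually prove Theorem \ref{GeneralCharacterFormula} --- it is imported from \cite{DT07} --- so the relevant comparison is with that source, whose proof is essentially the computation you give: unfold the orbit sum to a double sum over $U_\cP\times U_\cP$ via orbit--stabilizer, expand $\lambda\bigl((1+V)(u-1)(1+W)\bigr)=\lambda(u-1)+V\cdot a+W\cdot b+V^{T}MW$ exactly (the algebra $\fkn_\cP$ is closed under multiplication by transitivity of $\cP$, so every term is in the domain of $\lambda$), and finish with additive orthogonality and rank--nullity. Two small points you should make explicit: the quantity $\theta(x\cdot b)$ in part (b) is independent of the chosen solution $x$ of $Mx=-a$ precisely because $b\perp\Null(M)$, which is why the formula is well posed; and your computation produces $\theta(-x\cdot b)\,\theta\bigl(-\lambda(u-1)\bigr)$ and the prefactor $|U_\cP\lambda|/q^{\rank(M)}$, which differs from the displayed statement by an overall complex conjugation (the convention discrepancy the paper's own remark acknowledges) and by reading the typo $q^{|U_\cP\lambda|}$ as $|U_\cP\lambda|=\chi^\lambda(1)$, consistent with the $U_n$ specialization stated immediately afterwards. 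With those caveats recorded, your proof is complete.
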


\noindent\textbf{Remark.} There are two natural choices for $\chi^\lambda$, one of which is the conjugate of the other.  Theorem \ref{GeneralCharacterFormula} uses the convention of \cite{DI06} rather than \cite{DT07}.

\vspace{.25cm}

C. Andr\'e proved the $U_n$-version of this supercharacter formula for large characteristic, and \cite{ADS04} extended it to all finite fields.  Note that the following theorem follows from Theorem \ref{GeneralCharacterFormula} by choosing appropriate representatives for the superclasses and supercharacters.
\begin{theorem}  Let $\lambda\in \cS_n(q)$, and let $u\in U_n$ be a superclass representative as in (\ref{UOrbitRepresentatives}).  Then
\begin{enumerate}
\item[(a)] The character degree 
$$\chi^\lambda(1)=\prod_{i<j, \lambda_{ij}\neq 0} q^{j-i-1}.$$
\item[(b)] The character 
$$\chi^\lambda(u)=0$$
unless whenever $u_{jk}\neq 0$ with $j<k$, we have $\lambda_{ij}=0$ for all $i<j$ and $\lambda_{jl}=0$ for all $l>k$.
\item[(c)] If $\chi^\lambda(u)\neq 0$, then
$$\chi^\lambda(u)=\frac{\chi^\lambda(1)\theta\circ\lambda(u-1)}{q^{|\{i<j<k<l\ \mid\ u_{jk},\lambda_{il}\in \FF_q^\times\}|}}.$$
\end{enumerate}
\end{theorem}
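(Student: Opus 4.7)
The plan is to deduce all three parts from the general character formula in Theorem \ref{GeneralCharacterFormula}, specialized to the canonical superclass and supercharacter representatives of (\ref{UOrbitRepresentatives}) and (\ref{UcoOrbitRepresentatives}), where both $u-1$ and $\lambda$ have at most one nonzero entry in each row and each column.

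For (a), one uses the identity $\chi^\lambda(1)=|U_n\lambda|$ that follows directly from the definition of $\chi^\lambda$. The left action of $U_n$ on $\fkn_n^*$ is by row operations: a scalar multiple of row $i$ may be added to row $k$ provided $i<k<j$ when operating on a nonzero entry $\lambda_{ij}$ (otherwise the resulting entry lies outside $\fkn_n^*$). Because the rows and columns supporting the nonzero entries of $\lambda$ are all distinct, these operations act independently on each nonzero entry, producing $q^{j-i-1}$ contributions per nonzero entry; their product is $\prod_{\lambda_{ij}\neq 0}q^{j-i-1}$.

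For (b) and (c), I would substitute the chosen representatives into the triple $(a,b,M)$ of Theorem \ref{GeneralCharacterFormula}. The crucial structural point is that, with $u$ and $\lambda$ each having at most one nonzero in every row and column, each row $(i,j)$ of $M$ has at most one nonzero entry, located at column $(k_j,l_i)$ where $k_j$ is the unique column with $u_{jk_j}\neq 0$ and $l_i$ is the unique column with $\lambda_{il_i}\neq 0$ (when they exist and satisfy $j<k_j<l_i$), and the same sparsity holds column-wise by the symmetric argument. Hence $\rank(M)$ equals the number of nonzero entries of $M$, which by the definition $M_{ij,kl}=u_{jk}\lambda_{il}$ is exactly $|\{i<j<k<l : u_{jk},\lambda_{il}\in\FF_q^\times\}|$. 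This supplies the $q$-power denominator appearing in (c).

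The vanishing analysis in (b) is then a direct case analysis on $a$ and $b$. Sparsity forces $a_{ij}=u_{jk_j}\lambda_{ik_j}$ and $b_{jk}=u_{i_jj}\lambda_{i_jk}$, so either quantity is nonzero only under a specific coincidence between the supports of $u$ and $\lambda$. Whenever such a coincidence occurs, the corresponding row or column of $M$ is forced to be empty because the required indices collide (violating the strict chain $j<k<l$), and so either $Mx=-a$ fails to be solvable or $b\not\perp\Null(M)$, forcing $\chi^\lambda(u)=0$ by Theorem \ref{GeneralCharacterFormula}(a). Conversely, once the stated conditions of (b) are granted, the same accounting gives $a=b=0$ identically, the hypothesis of Theorem \ref{GeneralCharacterFormula}(a) is trivial, and $\theta(x\cdot b)=\theta(0)=1$; combining with the rank computation above yields the formula in (c).

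The main technical obstacle is the case analysis underlying (b): one must match up, for each nonzero $u_{jk}$, exactly which nearby nonzero entries of $\lambda$ produce an obstruction to solvability of $Mx=-a$ versus an obstruction to $b\perp\Null(M)$, and show these obstructions together are equivalent to the stated conditions on the row and column of $\lambda$ indexed by $j$. This is a bookkeeping exercise rather than a deep argument, but requires careful attention to the strict inequalities in $i<j<k<l$, which are what force row/column collisions in $M$ to produce genuine zeros rather than coincidental cancellations.
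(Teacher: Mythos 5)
Your overall strategy is exactly the one the paper intends: the paper gives no proof of this theorem beyond the remark that it follows from Theorem \ref{GeneralCharacterFormula} by choosing appropriate representatives, and your specialization of the triple $(a,b,M)$ to representatives with at most one nonzero entry per row and column is the right way to carry that out. The orbit count in (a), the observation that $M$ has at most one nonzero entry in each row and each column (hence $\rank(M)=|\{i<j<k<l\mid u_{jk},\lambda_{il}\in\FF_q^\times\}|$), and the dichotomy ``$a_{ij}\neq 0$ kills solvability of $Mx=-a$ because the corresponding row of $M$ is forced to vanish; $b_{jk}\neq 0$ kills $b\perp\Null(M)$ because the corresponding column of $M$ is forced to vanish'' are all correct and constitute the substance of the proof.

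The deferred ``bookkeeping exercise,'' however, does not close in the way you assert. Your own formula $a_{ij}=u_{jk_j}\lambda_{ik_j}$ shows that the solvability obstruction occurs precisely when some \emph{column} $k$ contains both $u_{jk}\neq0$ and $\lambda_{ik}\neq 0$ with $i<j$; it lives in column $k$ of $\lambda$, not in column $j$. Thus $a=0$ and $b=0$ together are equivalent to: whenever $u_{jk}\neq0$, one has $\lambda_{ik}=0$ for all $i<j$ and $\lambda_{jl}=0$ for all $l>k$. This is \emph{not} the condition printed in part (b), which has $\lambda_{ij}$ in place of $\lambda_{ik}$, and the two are genuinely inequivalent. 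Concretely, in $U_3$ take $u=1+e_{23}$: for $\lambda$ with $\lambda_{13}\neq0$ the printed condition holds, yet $a_{12}=u_{23}\lambda_{13}\neq0$ while $M=0$, so $\chi^\lambda(u)=0$ (a direct orbit sum confirms this) --- so the printed condition does not imply $a=b=0$ as you claim; and for $\lambda$ with only $\lambda_{12}\neq0$ the printed condition \emph{fails}, yet $\chi^\lambda(u)=\theta(-\lambda(u-1))=1\neq0$ --- so the implication ``printed condition fails $\Rightarrow\chi^\lambda(u)=0$'' that part (b) asserts cannot be derived from your (correct) obstruction analysis, because it is false. The printed ``$\lambda_{ij}=0$'' is evidently a typo for ``$\lambda_{ik}=0$'' (the latter is what matches the elementary-character vanishing rule of Andr\'e and of Arias-Castro--Diaconis--Stanley); with that correction your argument, including the converse direction and the evaluation in (c) via $\theta(x\cdot b)=1$, is complete. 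As written, you should either prove the corrected statement or flag the discrepancy rather than assert that the obstructions match the row and column of $\lambda$ indexed by $j$.
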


\section{Interpolating between $U_{n-1}$ and $U_n$}\label{TheInterpolatingGroups}

Fix $n\geq 1$.  For $0\leq m\leq n$, let
\begin{equation*}
\begin{split} U_{(m)} & =\{u\in U_n\ \mid\  u_{1j}=0, \text{ for $j\leq m$}\}=U_{\cP_{(m)}},\\
\fkn_{(m)} &= \{u-1\ \mid\  u\in U_{(m)}\}=\fkn_{\cP_{(m)}},\\
\fkn_{(m)}^* &=  \{\lambda:\fkn_{(m)}\rightarrow \FF_q\ \mid\  \lambda\text{ $\FF_q$-linear}\}=\fkn_{\cP_{(m)}}^*,
\end{split}
\qquad\text{where}
\qquad\cP_{(m)}=\xy<0cm,2.2cm>\xymatrix@R=.25cm@C=.3cm{*={} & *+{n}\ar @{-} [d]\\ & *{\vdots}\ar @{-} [d] \\ & *+{m+1} \ar @{-} [d] \ar @{-} [dl]\\ *+{1} & *+{m}\ar @{-} [d] \\ & *+{m-1} \ar @{-} [d]\\ & *{\vdots}\ar @{-} [d]\\ & *+{2}}\endxy.
\end{equation*}
Note that 
$$U_{n-1}\cong U_{(n)} \triangleleft U_{(n-1)} \triangleleft \cdots \triangleleft U_{(1)} \triangleleft U_{(0)} = U_n.$$
The goal of this section is to identify suitable orbit representatives for representatives for $U_{(m)}\backslash \fkn_{(m)}/ U_{(m)}$ and $U_{(m)}\backslash \fkn_{(m)}^*/ U_{(m)}$.

A matrix $A$ has an underlying vertex-labeled graph structure $G_A$ given by vertices 
$$V_A=\{\text{Nonzero entries in $A$}\}$$
and an edge from $A_{ij}$ to $A_{kl}$ if $i=k$ or $j=l$.  We label each vertex by its location in the matrix, so $A_{ij}$ has label $(i,j)$.  For example, for $a,b,c,d,e,f,g,h\in \FF_q^\times$,
$$ A=\left(\begin{array}{ccccc} 
0 & 0 & a & 0 & b\\ 
c & 0 & 0 & 0 & d\\
0 & e & 0 & f & 0\\
0 & 0 & 0 & 0 & g\\
0 & 0 & 0 & h & 0\end{array}\right) \qquad\text{implies}\qquad G_A= \left(\xy<0cm,1cm> \xymatrix@R=.25cm@C=.25cm{
{\ } & {\ } & *{a}\ar @{-} [rr] & {\ }  & *{b}\ar @{-} [d] \ar @{-} @/^{.2cm}/ [ddd]\\ 
*{c} \ar @{-} [rrrr] & {\ } & {\ } & {\ } & *{d}\ar @{-} [dd]\\
{\ } & *{e}\ar @{-} [rr] & {\ }  & *{f} \ar @{-} [dd] & \\
{\ } & {\ } & {\ } & {\ } & *{g}\\
{\ } & {\ } & {\ } & *{h} & {\ }}\endxy\right).$$

\subsection{Superclass representatives}

Unlike with $U_n$, the interpolating groups $U_{(m)}$ have several natural representatives to choose from.  In this case, we consider a ``natural choice" of an orbit representative to be one with a minimal number of nonzero entries.    This section introduces two particular examples.

A matrix $u\in U_{(m)}$ is a \emph{comb representative} if 
\begin{enumerate}
\item[(a)] At most one connected component of $G_{u-1}$ has more than one element,
\item[(b)] If $G_{u-1}$ contains a connected component $S$ with more than one element, then there exist $1\leq i_r<i_{r-1}<\cdots i_1\leq m<k_1<k_2<\cdots<k_r$ such that 
$$\hspace*{-.25cm}\left\{\begin{array}{ccccc}
 u_{1k_1} & u_{1k_2} & \cdots & u_{1k_{r-1}} & u_{1k_r}\\
 & & & u_{i_{r-1}k_{r-1}}\\
 & & \adots\\
 & u_{i_2k_2}\\
 u_{i_1k_1}\end{array}\right\}
 \quad \text{or}\quad
\left\{ \begin{array}{cccc}
 u_{1k_1} & u_{1k_2} & \cdots & u_{1k_{r}}\\
 & & & u_{i_{r}k_{r}}\\
 & & \adots\\
 & u_{i_2k_2}\\
 u_{i_1k_1}\end{array}\right\}$$
are the vertices of $S$.
\end{enumerate}

A matrix $u\in U_{(m)}$ is a \emph{path representative} if 
\begin{enumerate}
\item[(a)] At most one connected component of $G_{u-1}$ has more than one element,
\item[(b)] If $G_{u-1}$ contains a connected component $S$ with more than one element, then there exist $1<i_{r'}<i_{r'-1}<\cdots<i_1\leq m<k_1<k_2<\cdots<k_r$ with $r'\in \{r,r-1\}$ such that 
$$\hspace*{-.25cm}\left\{\begin{array}{ccccc}
 u_{1k_1} & & & \\
 & & & & u_{i_{r}k_{r}}\\
 & & & u_{i_{r-1}k_{r-1}} & u_{i_{r-1}k_{r}}\\
 & & \adots\\
 & u_{i_2k_2}\\
 u_{i_1k_1} & u_{i_1k_2}\end{array}\right\}
 \quad\text{or}\quad
\left\{ \begin{array}{ccccc}
 u_{1k_1} & & & \\
 & & & u_{i_{r-1}k_{r-1}}& u_{i_{r-1}k_{r}}\\
 & & & u_{i_{r-2}k_{r-1}} \\
 & & \adots\\
 & u_{i_2k_2}\\
 u_{i_1k_1} & u_{i_1k_2}\end{array}\right\}$$
are the vertices of $S$.
\end{enumerate}

Let
\begin{align*}
\cT_{(m)}^\vee &= \{u\in U_{(m)}\ \mid\  u\text{ a comb representative}\}\\
\cZ_{(m)}^\vee &= \{u\in U_{(m)}\ \mid\ u\text{ a path representative}\}.
\end{align*}

If $u\in \cZ_{(m)}^\vee$ has a connected component $S_u$ with a vertex in the first row, then we can order the vertices of $S_u$ by starting with the vertex in the first row and then numbering in order along the path.  For example, 
$$S_u=\left(\xy<0cm,1.5cm> \xymatrix@C=.25cm@R=.25cm{ *{x_1} \ar @{:} [ddddd] & {\ } & {\ } & {\ } &  {\ }\\ 
{\ } & {\ } & {\ } & {\ } & *{x_k}\\
{\ } & {\ } & {\ } & *{x_{k-2}}\ar @{:} [r]  & *{x_{k-1}} \ar @{:}[u]\\
{\ } & {\ } & \adots & *{x_{k-3}}\ar @{:} [u] \\
{\ } & *{x_4} & & & \\
*{x_2} \ar @{:} [r]  & *{x_3}  \ar @{:} [u] & & & }\endxy \right)
$$
where the indices of the $x$'s indicate the prescribed order.
The \emph{baggage} of $S_u$ at $x_j$ is
\begin{equation}\label{xBaggage}
\mathrm{bag}(x_j)=x_j(-x_{j-1})^{-1}x_{j-2}(-x_{j-3})^{-1}\cdots ((-1)^{j+1}x_1)^{(-1)^{j+1}}.
\end{equation}

\begin{proposition}  \label{PropositionSuperclassRepresentatives}  Let $0< m<n$.  Then 
\begin{enumerate}
\item[(a)] $\cT_{(m)}^\vee$ is a set of superclass representatives for $U_{(m)}$,
\item[(b)] $\cZ_{(m)}^\vee$ is a set of superclass representatives for $U_{(m)}$.
\end{enumerate}
\end{proposition}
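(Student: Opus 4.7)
The plan is to prove each part by providing an explicit reduction algorithm for existence and an invariant-based argument for uniqueness. I focus on (a); part (b) follows the same template.

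For existence in (a), given $u \in U_{(m)}$, I would reduce $u - 1 \in \fkn_{(m)}$ to a comb representative through a sequence of the row and column operations listed in Section 2.2. First, I would apply Gauss-style elimination to the submatrix of $u - 1$ indexed by rows $\{2, \ldots, n\}$ and columns $\{2, \ldots, n\}$; these indices inherit the total order from $\cP_{(m)}$, so every standard elementary operation is permitted, and this reduces the submatrix to at most one nonzero entry per row and per column (the classical $U_{n-1}$-superclass reduction). Next, for each entry $(1, k)$ with $(u-1)_{1k} \neq 0$ whose column contains an entry in some row $i \geq m+1$, I would add a suitable multiple of row $i$ to row $1$ to clear $(1, k)$, which is permitted since $i > 1$ in $\cP_{(m)}$. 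The remaining row-$1$ entries then lie in columns whose only other occupants (if any) have row index in $\{2, \ldots, m\}$. Finally, I would rearrange the ``teeth'' --- entries in rows $\{2, \ldots, m\}$ sitting below surviving row-$1$ entries --- so their row indices are strictly decreasing in column order, via further row operations within $\{2, \ldots, m\}$ coupled with column operations within $\{m+1, \ldots, n\}$.

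For uniqueness in (a), I need to show that two distinct elements of $\cT_{(m)}^\vee$ lie in distinct superclasses. One approach is to invoke Theorem \ref{GeneralCharacterFormula}: by choosing $\lambda \in \fkn_{(m)}^*$ to detect the comb-and-tooth profile, I would produce a supercharacter value that separates any two distinct comb representatives. An alternative approach is orbit counting: show that $|\cT_{(m)}^\vee|$ equals the number of superclasses (which, by the definition of a supercharacter theory, equals the number of supercharacters), so that the surjection from $\cT_{(m)}^\vee$ onto superclasses produced by the existence argument is automatically a bijection. Part (b) is handled by the same two-step argument, with the path-reduction algorithm producing a ``snake'' instead of a comb and the baggage function (\ref{xBaggage}) encoding the tooth values along the path.

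The hard part will be the tooth-rearrangement step in Phase 3 of the existence argument. If two teeth $(i, k)$ and $(j, l)$ appear with $k < l$ but $i < j$ --- opposite to the comb order --- then a naive row or column operation creates spurious entries in cells such as $(i, l)$ or $(j, k)$, which must then be cleaned up without disturbing the already-reduced staircase. A cleaner route may be to process the reduction column-by-column from right to left, building the comb incrementally so that the monotonicity $i_1 > i_2 > \cdots$ is automatic. For the path case, the ordering constraint is different (the snake alternates between shared rows and shared columns), and the corresponding obstacle is ensuring that the reduction yields a valid path with the correct baggage values.
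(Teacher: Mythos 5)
Your overall strategy for (a) — reduce off the first row as in $U_n$, then treat row $1$ separately — matches the paper's, but the step you yourself flag as ``the hard part'' is precisely the entire content of the paper's proof, and your proposal leaves it unresolved. The resolution is not a rearrangement of teeth at all: if two teeth appear in the wrong order, i.e.\ $u_{1k},u_{ik},u_{1l},u_{jl}\neq 0$ with $k<l$ but $i<j$, the paper shows this configuration is superclass-equivalent to one in which $u_{1l}$ is simply \emph{eliminated} (so $u_{jl}$ becomes a singleton component): add $-u_{1k}^{-1}u_{1l}\,\mathrm{Col}(k)$ to $\mathrm{Col}(l)$, which kills $u_{1l}$ at the cost of a spurious entry in position $(i,l)$, and then clear that entry by adding a multiple of $\mathrm{Row}(j)$ to $\mathrm{Row}(i)$ — legal because $i<j$ in $\cP_{(m)}$ since both exceed $1$. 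Your instinct that naive operations create spurious entries is correct, but the point is that each such spurious entry can be cleaned by an allowed operation, and the net effect is to strictly decrease the number of first-row entries violating monotonicity; iterating terminates in a comb. Without this computation your existence argument is incomplete.

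Your uniqueness argument is also weaker than it needs to be, and one of your two routes is circular: the supercharacter formula of Section 4 is \emph{derived} for these representatives, so you cannot use supercharacter values to certify that the representatives are in distinct superclasses; and the orbit-counting route requires independently enumerating the two-sided orbits on $\fkn_{(m)}$, which is the same problem in disguise. The paper's argument is elementary: a $U_{(m)}$-superclass is contained in a $U_n$-superclass, and row-reducing a comb upward inside $U_n$ (adding rows $2,\dots,m$ to row $1$, which is legal in $U_n$ though not in $U_{(m)}$) recovers the canonical $U_n$-form, so distinct combs already lie in distinct $U_n$-superclasses up to the first-row data, which is then pinned down directly. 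For (b), ``same template'' is not how the paper proceeds: it does not rerun the reduction for paths but instead exhibits explicit row and column operations (\ref{PathToCombClass}) showing that each path representative is superclass-equivalent to a specific comb representative whose teeth values are the baggage quantities (\ref{xBaggage}), so that (b) is a corollary of (a). You mention baggage, but deducing (b) from (a) via that explicit equivalence is the missing idea.
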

\begin{proof}
(a)  First note that the above representatives are in different superclasses of $U_n$, since the corresponding representative would have $u_{i_1k_1},u_{i_2k_2},\ldots,u_{i_rk_r}$ as the only nonzero elements in their columns (by row reducing up).

Since $1$ is incomparable to $j\in \{2,3,\ldots, m\}$ in $\cP_{(m)}$, we may not add row $j$ to row $1$ if $j\leq m$ when computing superclasses.  Thus, the orbits are the same as the corresponding orbits in $U_n$ except for the first row, and every column can have at most two nonzero entries with one of those entries being in the first row.  Therefore it suffices to show that if the rows of the second nonzero entries do not decrease as we move from left to right, we can convert them into an appropriate form.  The following sequence of row and column operations effects such an adjustment.
\begin{align*} 
\left(\begin{array}{cc} u_{1k} & u_{1l}\\ u_{ik} & 0 \\  0 & u_{jl} \end{array}\right)&\overset{-u_{1k}^{-1}u_{1l}\mathrm{Col}(k)+\mathrm{Col}(l)}{\longrightarrow} \left(\begin{array}{cc} u_{1k} & 0 \\ u_{ik} & -u_{jk}u_{1k}^{-1}u_{1l}  \\  0 & u_{jl} \end{array}\right)\\
&\overset{u_{jl}^{-1}u_{jk}u_{1k}^{-1}u_{1l}\mathrm{Row}(j)+\mathrm{Row}(i)}{\longrightarrow}
\left(\begin{array}{cc} u_{1k} & 0 \\ u_{ik} & 0 \\  0 & u_{jl} \end{array}\right).
\end{align*}

(b)  Note that row and column operations imply that 
\begin{equation}\label{PathToCombClass}
\left(\begin{array}{ccccc} x_1 &  & & & \\ 
& & & & x_k\\
 & & & x_{k-2} & x_{k-1}\\
& & \adots & x_{k-3}\\
& x_4 & & & \\
x_2  & x_3 & & &  \end{array}\right)
\quad \text{and} \quad 
\left(\begin{array}{ccccc}
 x_1 & \mathrm{bag}(x_3)  & \cdots &  \mathrm{bag}(x_{k-3}) &\mathrm{bag}(x_{k-1}) \\ 
& & & & x_k\\
 & & & x_{k-2} & \\
& & \adots & \\
& x_4 & & & \\
x_2  &  & & &  \end{array}\right)
\end{equation}
are in the same superclass.  Thus, (b) follows from (a).
\end{proof}

\subsection{Supercharacter representatives}\label{SectionSuperCharacterRepresentatives}

Recall that we identify $\lambda\in \fkn_\cP^*$ with matrices $\lambda\in \fkn_\cP$ by the convention
$$\lambda_{ij}=\lambda(e_{ij}),\qquad \text{where $e_{ij}\in \fkn$ has $(i,j)$ entry 1 and zeroes elsewhere.}$$
A function $\lambda\in \fkn_{(m)}^*$ is a \emph{comb representative} if 
\begin{enumerate}
\item[(a)] At most one connected component of $G_{\lambda}$ has more than one element,
\item[(b)] If $G_\lambda$ has a connected component $S$ with more than one element, then there exist $k_1>k_2>\cdots>k_{r}>m\geq i_{r'}>i_{r'-1}>\cdots >i_1>1$ with $r'\in \{r,r-1\}$ such that
$$\hspace*{-.25cm}\left\{\begin{array}{ccccc} 
& & & & \lambda_{1k_1}\\ 
& & & \lambda_{i_1k_2} & \lambda_{i_1k_1}\\ 
& & \lambda_{i_2k_3} & & \lambda_{i_2k_1}\\
& \adots & & & \vdots\\
\lambda_{i_{r-1}k_r} & & & & \lambda_{i_{r-1}k_1}\\
& & & & \lambda_{i_rk_1}\end{array}\right\}
\quad\text{or}\quad
\left\{ \begin{array}{ccccc} 
& & & & \lambda_{1k_1}\\ 
& & & \lambda_{i_1k_2} & \lambda_{i_1k_1}\\ 
& & \lambda_{i_2k_3} & & \lambda_{i_2k_1}\\
& \adots & & & \vdots\\
\lambda_{i_{r-1}k_r} & & & & \lambda_{i_{r-1}k_1}\end{array}\right\}$$
are the vertices of $S$.
\end{enumerate}

A function $\lambda\in \fkn_{(m)}^*$ is a \emph{path representative} if 
\begin{enumerate}
\item[(a)] At most one connected component of $G_{\lambda}$ has more than one element,
\item[(b)] If $G_{\lambda}$ contains a connected component $S$ with more than one element, then there exist $k_1>k_2>\cdots>k_r>m\geq i_{r'}>i_{r'-1}>\cdots>i_1>1$ with $r'\in \{r,r-1\}$ such that 
$$\hspace*{-.5cm}\left\{\begin{array}{ccccc} 
& & & & \lambda_{1k_1}\\ 
& & & \lambda_{i_1k_2} & \lambda_{i_1k_1}\\ 
& & \lambda_{i_2k_3} & \lambda_{i_2k_2} & \\
& \adots & \adots & & \\
\lambda_{i_{r-1}k_r} & \lambda_{i_{r-1}k_{r-1}} & & & \\
\lambda_{i_rk_r} & & & & \end{array}\right\}
\ \text{or}\ 
\left\{\begin{array}{ccccc} 
& & & & \lambda_{1k_1}\\ 
& & & \lambda_{i_1k_2} & \lambda_{i_1k_1}\\ 
& & \lambda_{i_2k_3} & \lambda_{i_2k_2} & \\
& \adots & \adots & & \\
\lambda_{i_{r-1}k_r} & \lambda_{i_{r-1}k_{r-1}} & & &  \end{array}\right\}$$
are the vertices of $S$.
\end{enumerate}

Let
\begin{align*}
\cT_{(m)} &= \{\lambda\in \fkn_{(m)}^\ast\ \mid\  \lambda\text{ a comb representative}\}\\
\cZ_{(m)} &= \{\lambda\in \fkn_{(m)}^\ast\ \mid\ \lambda \text{ a path representative}\}.
\end{align*}

If $\lambda\in \cZ_{(m)}$ has a connected component $S_\lambda$ with a vertex in the first row, then we can order the vertices of $S_\lambda$ by starting with the vertex in the first row and then numbering in order along the path.   For example,
$$S_\lambda=\left(\xy<0cm,1.3cm> \xymatrix@C=.25cm@R=.25cm{ {\ }  & {\ } & {\ } & {\ } &  *{y_1} \ar @{-} [d] \\ 
{\ } & {\ } & {\ } & *{y_3}\ar @{-} [d] & *{y_2}\ar @{-} [l] \\
{\ } & {\ } & {\ } & *{y_4}  & *{\ }\\
 & & *={\adots} & \\
{\ } & *{y_{r-2}} & & & \\
*{y_r} \ar @{-} [r]  & *{y_{r-1}}  \ar @{-} [u] & & & }\endxy \right)
$$
where the indices of the $y$'s indicate the prescribed order.  The \emph{baggage} of $S_\lambda$ at $y_j$ is
\begin{equation}\label{yBaggage}
\mathrm{bag}(y_j)=y_j(-y_{j-1})^{-1}y_{j-2}(-y_{j-3})^{-1}\cdots ((-1)^{j+1}y_1)^{(-1)^{j+1}}.
\end{equation}

\begin{proposition} \label{PropositionSupercharacterRepresentatives} Let $0< m<n$.  Then 
\begin{enumerate}
\item[(a)] $\cT_{(m)}$ is a set of supercharacter representatives,
\item[(b)] $\cZ_{(m)}$ is a set of supercharacter representatives.
\end{enumerate}
\end{proposition}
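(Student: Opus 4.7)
The plan is to model the proof on Proposition~\ref{PropositionSuperclassRepresentatives}, but now using the supercharacter row and column operations on $\fkn_{(m)}^*$: row $i$ may be added to row $j$ if $i<j$ in $\cP_{(m)}$, and column $l$ may be added to column $k$ if $l>k$ in $\cP_{(m)}$. These are opposite to the operations available for superclasses.

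For distinctness in part (a), I observe that given $\lambda\in\cT_{(m)}$ viewed inside $\fkn_n^*$, in the $U_n$-orbit one can further simplify by adding $-\lambda_{1k_1}^{-1}\lambda_{i_jk_1}\cdot\mathrm{row}(1)$ to $\mathrm{row}(i_j)$ for each $j$ (legal in $U_n$ since $1<i_j$ in the total order). Because row $1$ of $\lambda$ contains only the single nonzero entry $\lambda_{1k_1}$, this cleanly removes $\lambda_{i_jk_1}$ without disturbing any other column, and the result is a representative in $\cS_n(q)$ whose shape recovers the indices $k_1,\ldots,k_r$, $i_1,\ldots,i_{r-1}$ and the values $\lambda_{1k_1},\lambda_{i_1k_2},\ldots,\lambda_{i_{r-1}k_r}$. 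Distinct combs differing on this data thus lie in distinct $U_n$-orbits, hence in distinct $U_{(m)}$-orbits; combs agreeing on this data but differing in the column-$k_1$ entries $\lambda_{i_jk_1}$ must be separated using that this last simplification is forbidden in $U_{(m)}$ (since $1$ is incomparable to $i_j\leq m$ in $\cP_{(m)}$), so the values $\lambda_{i_jk_1}$ remain $U_{(m)}$-orbit invariants once row $1$ is in normal form.

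For existence in part (a), I reduce $\lambda\in\fkn_{(m)}^*$ to a comb in two phases. Phase~1 handles row $1$, which is immune to row operations but susceptible to column operations: letting $k_1>m$ be the largest column with $\lambda_{1k_1}\neq 0$, for each smaller $k$ with $\lambda_{1k}\neq 0$ I add $-\lambda_{1k_1}^{-1}\lambda_{1k}\cdot\mathrm{col}(k_1)$ to $\mathrm{col}(k)$, killing $\lambda_{1k}$ (at the cost of new entries at rows $>1$ in column $k$). Phase~2 applies the $U_{n-1}$-style supercharacter reduction to the submatrix on rows $\{2,\ldots,n\}$, permissible since $\cP_{(m)}$ restricted to $\{2,\ldots,n\}$ is a total order; one uses only operations that preserve the normal form of row $1$. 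The output has at most one ``secondary'' nonzero entry per row outside column $k_1$, which is precisely the comb shape. For part (b), I verify that every path in $\cZ_{(m)}$ is $U_{(m)}$-conjugate to a unique comb in $\cT_{(m)}$: starting from the path, I inductively add a suitable multiple of $\mathrm{row}(i_{j-1})$ to $\mathrm{row}(i_j)$ for $j=2,\ldots,r$ (permitted since $i_{j-1}<i_j$ in $\cP_{(m)}$) to eliminate $\lambda_{i_jk_j}$, introducing an entry at $(i_j,k_1)$ whose value telescopes into the baggage expression (\ref{yBaggage}). Then (a) implies (b).

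The main obstacle is the coordination of phases in the existence argument of (a): Phase~1's column operations seed new entries in rows $>1$ at the eliminated first-row columns, and Phase~2 must absorb these into the comb structure without reintroducing entries in row $1$ or disturbing column $k_1$ in unintended ways. Showing that this reduction algorithm always terminates at the prescribed comb shape, rather than some more complicated configuration, is the key computational step.
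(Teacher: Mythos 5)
Your overall strategy coincides with the paper's: column-reduce row $1$ to a single nonzero entry in some column $k_1>m$, observe that each row $j$ with $1<j\leq m$ can then only be cleared down to at most two nonzero entries (one stuck in column $k_1$, since row $1$ may not be added to row $j\leq m$ and column $k_1$ may not be used as a source without disturbing row $1$), and pass between combs and paths via the baggage-telescoping row operations underlying (\ref{PathToCombCharacter}). Your distinctness discussion is, if anything, spelled out more fully than the paper's one-line appeal to the $U_n$-orbit structure, and your part (b) is exactly the paper's reduction of paths to combs.

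The gap is precisely where you flag it, and it is not a routine bookkeeping matter: the claim that Phase 2 terminates at ``precisely the comb shape'' conceals the two substantive verifications that constitute most of the paper's argument. First, the secondary (off-spine) entries must be rearrangeable so that their rows increase as their columns decrease, i.e.\ one really can reach the staircase pattern $\lambda_{i_1k_2},\lambda_{i_2k_3},\ldots$ with $i_1<i_2<\cdots\leq m$ and $k_1>k_2>\cdots$; the paper proves this with an explicit row-then-column operation applied to a $2\times 3$ configuration in which two rows each carry an entry in column $l=k_1$ together with a second entry. Second --- and this is the requirement $k_r>m$ built into the definition of a comb --- if a row carries two nonzero entries then both columns must lie strictly East of column $m$; the paper establishes this with a four-step sequence of operations on a $3\times 3$ configuration showing that a secondary entry in a column $\leq m$ can always be eliminated at the cost of moving another. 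Neither fact follows formally from the two-phase description (a priori the algorithm could stall at a configuration with secondary entries in the wrong order, or in columns $\leq m$, which is not an element of $\cT_{(m)}$), so both computations must be supplied before the existence half of (a) is complete.
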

\begin{proof}
(a) First note that the above representatives are in different $U_n$-orbits, since the corresponding representative would have $\lambda_{i_1k_1},\lambda_{i_2k_2},\ldots,\lambda_{i_rk_r}$ as the only nonzero elements in their rows (by row reducing down from the first row).

We may assume the first row has at most one nonzero entry by column reducing.  If it has no nonzero entry, then further reductions are the same as in $U_n$.  If the first row has a nonzero entry in column $l$, any row $j$ with $1<j\leq m$ can have at most two nonzero entries, since we can column reduce using every column except column $l$.  In fact, the only way row $j$ can have two nonzero entries is if one is in column $l$ and the other is to the left of $l$.   Note that
\begin{align*} 
\left(\begin{array}{ccc} \lambda_{ik} & 0 & \lambda_{il}  \\ 0 & \lambda_{jk'} & \lambda_{jl} \end{array}\right) &\overset{-\lambda_{il}^{-1}\lambda_{jl}\mathrm{Row}(i)+\mathrm{Row}(j)}{\longrightarrow} \left(\begin{array}{ccc} \lambda_{ik} & 0 & \lambda_{il}  \\ -\lambda_{ik'}\lambda_{il}^{-1}\lambda_{jl} & \lambda_{jk} & 0 \end{array}\right)\\
&\overset{\lambda_{jk'}^{-1}\lambda_{ik'}\lambda_{il}^{-1}\lambda_{jl} \mathrm{Col}(k')+\mathrm{Col}(k)}{\longrightarrow}
\left(\begin{array}{ccc} \lambda_{ik} & 0 & \lambda_{il}  \\ 0 & \lambda_{jk} & 0 \end{array}\right).
\end{align*}
Thus, the second nonzero entries can be arranged to occur in decreasing rows from left to right. 

It suffices to show that if a row has two nonzero entries then both columns of the nonzero entries must be to the right of the $m$th column.   Note that if $k,k'\leq m$, then
\begin{align*} 
\left(\begin{array}{ccc} \cdot & \cdot & \lambda_{1l}  \\ 0 & \lambda_{ik'} & \lambda_{il} \\
\lambda_{jk} & 0 & \lambda_{jl}\end{array}\right) &\overset{-\lambda_{jl}^{-1}\lambda_{jk}\mathrm{Col}(l)+\mathrm{Col}(k)}{\longrightarrow}\left(\begin{array}{ccc} \cdot & \cdot &\lambda_{1l}\\ -\lambda_{il}\lambda_{jl}^{-1}\lambda_{jk}  & \lambda_{ik'} & \lambda_{il} \\
0 & 0 & \lambda_{jl}\end{array}\right)\\
&\overset{\lambda_{ik'}^{-1}\lambda_{il}\lambda_{jl}^{-1}\lambda_{jk} \mathrm{Col}(k')+\mathrm{Col}(k)}{\longrightarrow}
\left(\begin{array}{ccc} \cdot & \cdot &\lambda_{1l}\\ 0  & \lambda_{ik'} & \lambda_{il} \\
0 & 0 & \lambda_{jl}\end{array}\right)\\
&\overset{-\lambda_{il}^{-1}\lambda_{jl}\mathrm{Row}(i)+\mathrm{Row}(j)}{\longrightarrow}
\left(\begin{array}{ccc} \cdot & \cdot &\lambda_{1l}\\ 0  & \lambda_{ik'} & \lambda_{il} \\
0 & -\lambda_{ik'}\lambda_{il}^{-1}\lambda_{jk} & 0\end{array}\right)\\
&\overset{-\lambda_{il}^{-1}\lambda_{ik'}\mathrm{Col}(l)+\mathrm{Col}(k')}{\longrightarrow}
\left(\begin{array}{ccc} \cdot & \cdot &\lambda_{1l}\\ 0  & 0 & \lambda_{il} \\
0 & -\lambda_{ik'}\lambda_{il}^{-1}\lambda_{jk} & 0\end{array}\right)
\end{align*}
Thus, any nonzero pair of nonzero entries in a row must occur to the right of column $m$.

(b)  Note that row and column operations imply that 
\begin{equation}\label{PathToCombCharacter}
\left(\begin{array}{ccccc}  &  & & & y_1\\ 
& & & y_3 & y_2\\
 & & \adots & y_4 & \\
& y_{k-3} &  & \\
y_{k-1} &  y_{k-2} & & & \\
y_k & & & &  \end{array}\right)
\quad\begin{array}{c} \text{and}\end{array}  \quad
\left(\begin{array}{ccccc}  &  & & & y_1\\ 
& & & y_3 & -y_1\mathrm{bag}(y_2)\\
 & & \adots &  & \vdots \\
& y_{k-3} &  & & -y_1\mathrm{bag}(y_{k-4})\\
y_{k-1} &   & & & -y_1\mathrm{bag}(y_{k-2})\\
 & & & &  -y_1\mathrm{bag}(y_{k}) \end{array}\right)
 \end{equation}
 are give rise to the same supercharacter, so (b) follows from (a).
\end{proof}

\section{Supercharacter formulas for $U_{(m)}$}

This section develops supercharacter formulas for both comb and path representatives.  After developing tools that allow us to decompose characters as products of simpler characters, we prove a character formula for comb characters.  We then use the translation between comb and path representatives of (\ref{PathToCombClass}) and (\ref{PathToCombCharacter}) to get a more combinatorial character formula for path representatives.

\subsection{Multiplicativity of supercharacter formulas}

Let $u\in U_{(m)}$.  For a connected component $S$ of $G_{u-1}$, let $u[S]\in U_{(m)}$ be given by
$$u[S]_{jk}=\left\{\begin{array}{ll} u_{jk}, & \text{if $u_{jk}\in V_S$,}\\ 0, & \text{otherwise.}\end{array}\right.$$

Similarly, let $\lambda\in \fkn_{(m)}^\ast$.  For a connected component $T$ of $G_\lambda$, let $\lambda[T]\in \fkn_{(m)}^*$ be given by
$$\lambda[T]_{jk}=\left\{\begin{array}{ll} \lambda_{jk}, & \text{if $\lambda_{jk}\in V_T$,}\\ 0, & \text{otherwise.}\end{array}\right.$$

The following lemma allows us to decompose the supercharacter formula of a pattern group $U_\cP$ by connected components. 

\begin{lemma}\label{DecomposingCharacterConjugacyComponents}
Let $u\in U_\cP$ and $\lambda\in \fkn_\cP^\ast$.  Let $S_1,S_2,\ldots, S_k$ be the connected components of $G_{u-1}$ and $T_1,T_2,\ldots, T_l$ be the connected components of $G_\lambda$. Then
$$\chi^\lambda(u)= \prod_{j=1}^l  \chi^{\lambda[T_j]}(1) \prod_{i=1}^k \frac{\chi^{\lambda[T_j]}(u[S_i])}{\chi^{\lambda[T_j]}(1)}.$$
\end{lemma}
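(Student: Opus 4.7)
The plan is to apply Theorem~\ref{GeneralCharacterFormula} directly and verify that each of its three ingredients --- the degree prefactor $q^{|U_\cP\lambda|}/q^{\rank(M)}$, the linear-character factor $\theta\circ\lambda(u-1)$, and the correction $\theta(x\cdot b)$ --- decomposes compatibly with the connected-component partitions. The unifying observation is that distinct components of $G_{\lambda}$ (respectively of $G_{u-1}$) have pairwise disjoint row-supports and pairwise disjoint column-supports: two nonzero entries sharing a row or a column lie in a common component by the very definition of $G_A$.

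First I would handle the degree. Since $\chi^\lambda(1) = |U_\cP\lambda|$ and the row operations generating the left $U_\cP$-action on $\lambda$ act independently on rows belonging to different components, the left orbit factors as $U_\cP\lambda \cong \prod_j U_\cP\lambda[T_j]$, giving $\chi^\lambda(1) = \prod_j \chi^{\lambda[T_j]}(1)$. Next, because $\lambda = \sum_j \lambda[T_j]$ as $\FF_q$-linear functionals and $u-1 = \sum_i (u[S_i]-1)$ as elements of $\fkn_\cP$, bilinearity together with the fact that $\theta$ is a homomorphism yields
\begin{equation*}
\theta\circ\lambda(u-1) \;=\; \prod_{i,j} \theta\circ \lambda[T_j](u[S_i]-1).
\end{equation*}

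The main technical step is to show that the matrix $M$ and vectors $a,b$ admit a block decomposition indexed by pairs $(i,j)$ of components. Indeed, the summands in $a_{ij} = \sum_{k} u_{jk}\lambda_{ik}$, in $b_{jk} = \sum_i u_{ij}\lambda_{ik}$, and the entries $M_{(i,j),(k,l)} = u_{jk}\lambda_{il}$ each pair a $u$-entry (supported in a unique component $S_{i_0}$ by the disjoint-support observation) with a $\lambda$-entry (supported in a unique component $T_{j_0}$). After reordering the index set $J$ so that each element is grouped with the pair $(S_{i_0}, T_{j_0})$ it touches, $M$ becomes block-diagonal and $a,b$ split as corresponding direct sums. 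Consequently $Mx=-a$ can be solved block-by-block, so that $\rank(M) = \sum_{i,j}\rank(M^{(i,j)})$ and $\theta(x\cdot b) = \prod_{i,j}\theta(x^{(i,j)}\cdot b^{(i,j)})$, and the data in the $(i,j)$-block is precisely the data that Theorem~\ref{GeneralCharacterFormula} produces for the pair $(u[S_i],\lambda[T_j])$.

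Combining the three factorizations via Theorem~\ref{GeneralCharacterFormula} gives
\begin{equation*}
\frac{\chi^\lambda(u)}{\chi^\lambda(1)} \;=\; \prod_{i,j}\frac{\chi^{\lambda[T_j]}(u[S_i])}{\chi^{\lambda[T_j]}(1)},
\end{equation*}
which, combined with $\chi^\lambda(1) = \prod_j \chi^{\lambda[T_j]}(1)$, rearranges into the statement of the lemma. I expect the main obstacle to lie in the block-diagonalization of $M$: one must carefully handle indices $(i,j)\in J$ that are touched by no component (these contribute empty blocks and drop out of the product), and check that the solvability condition $b\perp\Null(M)$ in part (a) of Theorem~\ref{GeneralCharacterFormula} decouples into the conjunction of the analogous per-block conditions, so that $\chi^\lambda(u)$ vanishes iff one of the factors $\chi^{\lambda[T_j]}(u[S_i])$ does.
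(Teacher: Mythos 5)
Your proposal is correct, but it follows a genuinely different route from the paper. The paper argues directly from the orbit-sum definition of the supercharacter, $\chi^\lambda = \frac{|U\lambda|}{|U\lambda U|}\sum_{\mu\in U\lambda U}\theta\circ(-\mu)$: it reduces to two one-step claims (splitting off one component of $\lambda$, then one component of $u$) and for each uses the fact that the two-sided orbit $U\lambda U$ is an additive ``convolution'' of the orbits $U\lambda[T]U$ and $U\lambda[T']U$, together with a counting identity for the number of ways to write $\lambda'=\gamma+\mu$; this factors the sum over the orbit. You instead invoke the explicit character formula of Theorem~\ref{GeneralCharacterFormula} and show that the matrix $M$ and the vectors $a,b$ decompose block-diagonally over pairs $(S_i,T_j)$ of components, using the disjoint row/column support of distinct components. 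The block-level data is exactly what Theorem~\ref{GeneralCharacterFormula} produces for the pair $(u[S_i],\lambda[T_j])$, so the rank, the $\theta(x\cdot b)$ factor, the vanishing criterion, and $\theta\circ\lambda(u-1)$ all factor, giving $\chi^\lambda(u)/\chi^\lambda(1)=\prod_{i,j}\chi^{\lambda[T_j]}(u[S_i])/\chi^{\lambda[T_j]}(1)$, and the degree factorization finishes it.

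The trade-off: the paper's argument is more self-contained (it needs only the definition of $\chi^\lambda$ and elementary orbit counting, not the machinery of~\cite{DT07}) and is cleaner as a proof of a general multiplicativity principle. Your argument is heavier up front but has the merit of being exactly the technique the paper deploys later in Lemma~\ref{ClassMultiplicative} for the finer column-by-column factorization, so it unifies the two multiplicativity results under one block-diagonalization idea; it also exposes precisely why the nonvanishing condition decouples, which the paper's proof leaves implicit. One small point you should make explicit when writing this up: a nonzero $a_{rs}$ or $b_{kl}$ forces the corresponding row/column of $M$ to lie in some $(i,j)$-block (because $a_{rs}\neq 0$ requires row $s$ of $u$ and row $r$ of $\lambda$ to be nonempty, and analogously for $b$), so the ``unassigned'' coordinates carry only zeros of $a$ and $b$ and genuinely do drop out of the solvability and orthogonality conditions. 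Also, when a block is entirely empty, $\lambda[T_j]$ vanishes on $u[S_i]-1$, so the corresponding ratio equals $1$ and the product is unaffected; worth saying to close the loop.
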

\begin{proof}
Let $U=U_\cP$.  The proof follows from the following two claims:
\begin{enumerate}
\item[(1)]  If $\lambda$ has two connected components $S$ and $T$, then
$$\chi^\lambda(u)=\chi^{\lambda[S]}(u)\chi^{\lambda[T]}(u).$$
\item[(2)] If $u$ has two connected components $S$ and $S'$, then
$$\chi^\lambda(u)=\chi^\lambda(1) \frac{\chi^\lambda(u[S])}{\chi^\lambda(1)}\frac{\chi^\lambda(u[S'])}{\chi^\lambda(1)}.$$
\end{enumerate}
(1) Note that since $T$ and $T'$ involve distinct rows and columns, the left orbits of $\lambda[T]$ and $\lambda[T']$ are independent and involve distinct rows.  Thus,
$$|U\lambda|=|U \lambda[T]||U\lambda[T']|.$$
In fact, for $\lambda'\in U\lambda U$, 
$$|\{(\gamma,\mu)\in (U\lambda[T] U)\times (U\lambda[T']U)\ \mid\ \lambda'=\gamma+\mu\}|=\frac{|U \lambda[T]U||U\lambda[T']U|}{|U\lambda U|}.$$
Thus, by definition
\begin{align*}
\chi^\lambda(u) & =\frac{|U\lambda|}{|U\lambda U|}\sum_{\lambda'\in U\lambda U} \theta(-\lambda'(u-1))\\
&  =\frac{|U\lambda|}{|U\lambda[T] U||U\lambda[T'] U|}\sum_{\gamma\in U\lambda[T] U\atop \mu\in U\lambda[T']U} \theta\bigl(-\gamma(u-1)-\mu(u-1)\bigr)\\
&  =\frac{|U\lambda[T]||U\lambda[T']|}{|U\lambda[T] U||U\lambda[T'] U|}\sum_{\gamma\in U\lambda[T] U\atop \mu\in U\lambda[T']U} \theta\bigl(-\gamma(u-1)\bigr)\theta\bigl(-\mu(u-1)\bigr)\\
&  =\frac{|U\lambda[T]|}{|U\lambda[T] U|}\sum_{\gamma\in U\lambda[T] U}\theta\bigl(-\gamma(u-1)\bigr) \frac{|U\lambda[T']|}{|U\lambda[T'] U|}\sum_{\mu\in U\lambda[T']U} \theta\bigl(-\mu(u-1)\bigr)\\
&= \chi^{\lambda[T]}(u)\chi^{\lambda[T']}(u).
\end{align*}

(2)  For any $u'\in UuU$,
$$|\{(v,w)\in (Uu[S] U)\times (Uu[S']U)\ \mid\ u'-1=v-1+w-1\}|=\frac{|U u[S]U||Uu[S']U|}{|UuU|}.$$
We have that
\begin{align*}
\chi^\lambda(u)&=\frac{\chi^\lambda(1)}{|U u U|}\sum_{v\in U u U} \theta(-\lambda(v-1))\\
&=\frac{\chi^\lambda(1)}{|U u[S] U||U u[S'] U}\sum_{v\in U u[S] U\atop w\in Uu[S'] U} \theta(-\lambda(v-1+w-1))\\
&=\frac{\chi^\lambda(1)}{\chi^\lambda(1)\chi^\lambda(1)} \frac{\chi^\lambda(1)}{|U u[S] U|}\sum_{v\in Uu[S]U}\theta(-\lambda(v-1))\frac{\chi^\lambda(1)}{|U u[S'] U|}\sum_{w\in Uu[S']U}\theta(-\lambda(w-1))\\
&=\chi^\lambda(1) \frac{\chi^\lambda(u[S])}{\chi^\lambda(1)}\frac{\chi^\lambda(u[S'])}{\chi^\lambda(1)},
\end{align*}
as desired.
\end{proof}

\begin{corollary} \label{CharacterMultiplicative} Let $u\in U_\cP$ and $\lambda\in \fkn_\cP^*$ with connected components $T_1, \ldots, T_l$.  Then
$$\chi^\lambda(u)=\prod_{i=1}^l \chi^{\lambda[T_i]}(u),$$
\end{corollary}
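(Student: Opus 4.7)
The plan is to derive Corollary \ref{CharacterMultiplicative} as an easy induction on $l$, bootstrapping from Claim (1) inside the proof of Lemma \ref{DecomposingCharacterConjugacyComponents}. That claim already establishes that whenever $\lambda$ splits as $\lambda = \lambda[T] + \lambda[T']$ with $T$ and $T'$ supported on disjoint sets of rows and columns, one has $\chi^\lambda(u) = \chi^{\lambda[T]}(u)\,\chi^{\lambda[T']}(u)$; this factorization is independent of $u$ and does not require $u$ to decompose at all. The corollary should fall out by iterating this two-piece split $l-1$ times.

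First I would make the following observation: if $T_1,\ldots,T_l$ are the connected components of $G_\lambda$, then for any partition of the index set $\{1,\ldots,l\}$ into two nonempty subsets $A \sqcup B$, the functionals $\lambda_A = \sum_{j \in A} \lambda[T_j]$ and $\lambda_B = \sum_{j \in B} \lambda[T_j]$ are supported on disjoint sets of rows and columns. This is exactly the hypothesis needed by Claim (1) in the proof of Lemma \ref{DecomposingCharacterConjugacyComponents} (its argument only uses disjointness of rows and columns, not that each piece is itself a single component), so
\[
\chi^\lambda(u) \;=\; \chi^{\lambda_A}(u)\,\chi^{\lambda_B}(u).
\]

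I would then induct on $l$. The base case $l = 1$ is trivial, since the right-hand side reduces to $\chi^\lambda(u)$. For the inductive step, take $A = \{1\}$ and $B = \{2,\ldots,l\}$, so that $\lambda_A = \lambda[T_1]$ and $\lambda_B$ has connected components exactly $T_2,\ldots,T_l$. Applying the two-piece factorization above and then the inductive hypothesis to $\lambda_B$ gives
\[
\chi^\lambda(u) \;=\; \chi^{\lambda[T_1]}(u)\,\chi^{\lambda_B}(u) \;=\; \chi^{\lambda[T_1]}(u)\prod_{i=2}^l \chi^{\lambda[T_i]}(u) \;=\; \prod_{i=1}^l \chi^{\lambda[T_i]}(u).
\]

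I do not foresee any real obstacle: the substantive computation has already been done in establishing Claim (1) of the preceding lemma. The only mild point to verify carefully is that merging several connected components of $G_\lambda$ into a single block $\lambda_B$ preserves the row/column-disjointness hypothesis used by that claim, which is immediate from the definition of connected component.
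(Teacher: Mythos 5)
Your proof is correct and is essentially the argument the paper implicitly intends: the corollary is stated without its own proof because it is exactly the iteration of Claim (1) inside the proof of Lemma \ref{DecomposingCharacterConjugacyComponents}, and your induction on $l$, together with the observation that the Claim (1) argument only uses row/column-disjointness of the two pieces (so grouping $T_2,\dots,T_l$ into a single block $\lambda_B$ is harmless), fills in that iteration cleanly.
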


To obtain character formulas for $U_{(m)}$ we will require a slightly more refined multiplicativity result that depends on the poset structure $\cP_{(m)}$ and a choice of comb representatives.

For $u\in U_{(m)}$ and $1\leq k\leq n$, let $u[k]\in U_{(m)}$ be given by
$$u[k]_{ij}=\left\{\begin{array}{ll} u_{ij}, & \text{if $j=k$},\\ 0, & \text{otherwise.}\end{array}\right.$$
That is, $u[k]$ is the same as $u$ in the $k$th column, but zero elsewhere.
For $\lambda\in \fkn_\cP^*$, let $\lambda[u,k]\in \fkn_\cP^*$ be given by
$$\lambda[u,k]_{il}=\left\{\begin{array}{ll} \lambda_{il}, & \text{if $l\geq k$ and $u_{jk}\neq 0$ for some $j\geq i$,}\\ 0, & \text{otherwise.}\end{array}\right.$$
That is, $\lambda[u,k]$ is the same as $\lambda$ weakly NorthEast of the nonzero entries of $u$ in the $k$th column, but has zeroes elsewhere.

The following lemma states that we can compute supercharacter formulas for $U_{(m)}$ column by column on the superclasses.

\begin{lemma}\label{ClassMultiplicative}
Let $u\in U_{(m)}$ with $u\in \cT_{(m)}^\vee$ and let $\lambda\in \cT_{(m)}$.  Then
\begin{enumerate}
\item[(a)] The character $\chi^\lambda(u)\neq 0$ if and only if $\chi^{\lambda[u,k]}(u[k])\neq 0$ for all $2\leq k\leq n$.
\item[(b)] The character value
$$\chi^\lambda(u)=\chi^\lambda(1)\prod_{k=2}^n\frac{\chi^{\lambda[u,k]}(u[k])}{\chi^{\lambda[u,k]}(1)}.$$
\end{enumerate}
\end{lemma}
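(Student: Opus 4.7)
The plan is to apply Theorem \ref{GeneralCharacterFormula} to both sides and show that the character-formula data $(a,b,M)$ attached to $(u,\lambda)$ decomposes as a direct sum, indexed by $k$, whose $k$-th summand is exactly the data $(a^{(k)}, b^{(k)}, M^{(k)})$ attached to $(u[k], \lambda[u,k])$. Once this block decomposition is in place, both (a) and (b) will follow quickly.

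First I will compute supports. Since $u[k]-1$ has nonzero entries only in column $k$, the formulas of Theorem \ref{GeneralCharacterFormula} show that $a^{(k)}_{ij}$ is supported on positions with $u_{jk} \neq 0$ and $j<k$, that $b^{(k)}_{jl}$ is supported on positions with $j=k$, and that $M^{(k)}_{ij, k'l}$ is supported on positions with $k'=k$ and $u_{jk} \neq 0$. The definition of $\lambda[u,k]$ is engineered precisely so that $\lambda[u,k]_{il} = \lambda_{il}$ at every index $(i,l)$ that appears in these formulas; consequently $(a^{(k)}, b^{(k)}, M^{(k)})$ equals the contribution of column $k$ in the defining sums for $(a,b,M)$, so $a = \sum_k a^{(k)}$, $b = \sum_k b^{(k)}$, and $M = \sum_k M^{(k)}$.

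The crucial step is pairwise disjointness of these summands, so that the sums are genuine direct sums (i.e.\ the matrix data are block diagonal). Column-index disjointness is immediate, because each summand is confined to a distinct first-coordinate $k'=k$ or $j=k$. Row-index disjointness is the delicate point: the constraint $(i,j)\in J$ forces $j\geq 2$, so row $1$ of $u$, which houses the ``spine'' of the comb and is therefore the only row shared among several columns, contributes nothing to the row indices of $a$ or $M$. For $j\geq 2$, the comb representative hypothesis on $u$ forces any nonzero $u_{jk}$ to arise either as a tooth $(j,k)=(i_s,k_s)$ or as an isolated singleton entry; in both cases $k$ is uniquely determined by $j$, so the row supports are pairwise disjoint. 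Hence $a = \bigoplus_k a^{(k)}$, $b = \bigoplus_k b^{(k)}$, and $M = \bigoplus_k M^{(k)}$.

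Given this decomposition, (a) is immediate from Theorem \ref{GeneralCharacterFormula}(a) because the existence of $x$ with $Mx=-a$ and the condition $b\perp \Null(M)$ decouple into the analogous conditions per block. For (b), block diagonality gives $\rank M=\sum_k \rank M^{(k)}$ and $\theta(x\cdot b)=\prod_k \theta(x^{(k)}\cdot b^{(k)})$, and a position-by-position check shows $\theta\circ\lambda(u-1)=\prod_k \theta\circ\lambda[u,k](u[k]-1)$; dividing $\chi^\lambda(u)$ by $\chi^\lambda(1)$ cancels the global $|U_{(m)}\lambda|$ prefactor, and multiplying back by $\chi^\lambda(1)$ yields the stated identity. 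Columns $k$ at which $u$ has no nonzero entry contribute $u[k]=1$ and $\lambda[u,k]=0$, hence trivial factors of $1$, so the product may be harmlessly extended over all $k\in\{2,\ldots,n\}$. The main obstacle will be the row-disjointness verification in Step 3, since that is the single place where the comb representative hypothesis on $u$ is genuinely used.
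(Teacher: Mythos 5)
Your proposal is correct and follows essentially the same route as the paper: both apply Theorem \ref{GeneralCharacterFormula}, decompose $(a,b,M)$ into blocks indexed by the columns of $u$, use the fact that a comb representative has at most one nonzero entry in each row $j\geq 2$ (so row $1$, the only shared row, never occurs as the ``$u_{jk}$'' index) to get disjointness of the row supports, and check that $\lambda[u,k]$ agrees with $\lambda$ at exactly the entries entering the $k$-th block. The paper's proof is this same block-diagonalization argument, with the blocks recorded as $M_{R_k,C_k}$.
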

\begin{proof}
(a)  Let $M$ correspond to $(\lambda,u)$ as in Theorem \ref{GeneralCharacterFormula}.  Note that $M_{(i,j),(k.l)},M_{(i,j),(k',l')}\in \FF_q^\times$ implies $\lambda_{il},u_{jk},\lambda_{il'},u_{jk'}\in \FF_q^\times$, so 
$$u=\begin{array}{@{}r@{} c} & \begin{array}{ccc} & \ss{k} & \ss{k'}\end{array}\\ \begin{array}{r@{}} {}\\ \ss{j}\\ {}\end{array} & \left(\begin{array}{ccc} & & \\ u_{jk} & u_{jk'} & \\ & &  \end{array}\right)\end{array}\qquad \text{and} \qquad \lambda=\begin{array}{@{}r@{} c} & \begin{array}{ccc} & \ss{l} & \ss{l'}\end{array}\\ \begin{array}{r@{}} \ss{i}\\ {}\\ {}\end{array} & \left(\begin{array}{ccc} & \lambda_{il} & \lambda_{il'}\\ & &  \\
 & & \end{array}\right)\end{array}.$$
 However, since $u\in \cT_{(m)}^\vee$, the only row of $u$ which can have more than one nonzero entry is row 1.  Since $i<j$, we have $k=k'$ and the nonzero entries of $u$ contribute to distinct rows of $M$.  Similiarly, if $M_{(i,j),(k.l)},M_{(i',j'),(k,l)}\in \FF_q^\times$ implies $\lambda_{il},u_{jk},\lambda_{i'l},u_{j'k}\in \FF_q^\times$, so 
$$u=\begin{array}{@{}r@{} c} & \begin{array}{ccc} & \ss{k} & {}\end{array}\\ \begin{array}{r@{}} {}\\ \ss{j}\\ \ss{j'}\end{array} & \left(\begin{array}{ccc} & & \\ & u_{jk}   & \\   & u_{j'k} &  \end{array}\right)\end{array}\qquad \text{and} \qquad \lambda=\begin{array}{@{}r@{} c} & \begin{array}{ccc} &  & \ss{l}\end{array}\\ \begin{array}{r@{}} \ss{i}\\ \ss{i'}\\ {}\end{array} & \left(\begin{array}{ccc} & & \lambda_{il}\\ & & \lambda_{i'l} \\
 & & \end{array}\right)\end{array}.$$
Thus, distinct columns of $u$ contribute to distinct columns of $M$. For $1\leq k\leq n$,
\begin{equation}\label{RandCDefinition}
\begin{split}
R_k & = \begin{array}{l}\text{rows of $M$ that have nonzero entries corresponding}\\ \text{to the nonzero entries of $u$ in column $k$}\end{array}\\
C_k&=\begin{array}{l}\text{columns of $M$ that have nonzero entries corresponding}\\ \text{to the nonzero entries of $u$ in column $k$}\end{array}
\end{split}
\end{equation}
By choosing an appropriate order on the rows and columns of $M$, 
\begin{equation} \label{MDecomposition}
M=M_{R_1,C_1}\oplus M_{R_2,C_2}\oplus\cdots \oplus M_{R_n,C_n},
\end{equation}
where $M_{R_k,C_k}$ is the submatrix of $M$ using rows $R_k$ and columns $C_k$.

Using (\ref{MDecomposition}), there exists a solution to $Mx=-a$ if and only if for each $1\leq k\leq n$, there exist $x_k\in \FF_q^{C_k}$ such that $M_{R_k,C_k} x_k=-a_{R_k}$. 

If $a_{ij}\neq 0$, then there exist $\lambda_{ik},u_{jk}\in \FF_q^\times$ for some 
$k$.  Since row $j$ in $u$ has at most one nonzero entry, $a_{ij}=u_{jk}\lambda_{ik}$ .  Thus, $a_{R_k}$ only depends on the pair $(\lambda[u,k],u[k])$.

By (\ref{MDecomposition}), we have
$$\Null(M)= \Null(M_{R_1,C_1})\oplus \Null(M_{R_2,C_2})\oplus\cdots\oplus \Null(M_{R_n,C_n}),$$
so $b$ is perpendicular to $\Null(M)$ if and only if $b_{C_k}$ is perpendicular to $M_{R_k,C_k}$ for all $k$.   The condition $(k,l)\in C_k$ implies $u_{jk}\neq 0$ for some $j$, so $b_{kl}\in \FF_q^\times$ implies $b_{kl}=u_{1k}\lambda_{1l}+u_{jk}\lambda_{jl}$.  Thus, $b_{C_k}$ only depends on the pair $(\lambda[u,k],u[k])$, and (a) follows.

(b) Since $C_1=R_1=\emptyset$, it follows from (\ref{MDecomposition}) that
$$\rank(M)=\sum_{k=1}^n \rank(M_{R_k,C_k})=\sum_{k=2}^n\rank(M_{R_k,C_k}).$$
It follows from (a) that
$$\theta(x\cdot b)=\prod_{k=2}^n \theta(x_{C_k}\cdot b_{C_k}),$$
and by inspection
$$\theta\circ\lambda(u-1)=\prod_{(j,k)} \theta(u_{jk}\lambda_{jk})=\prod_{k=1}^n \prod_{j<k} \theta(u_{jk}\lambda_{jk})=\prod_{k=1}^n \theta\bigl(\lambda[u,k](u[k]-1)\bigr).$$
Now (b) follows from (a).
\end{proof}

\noindent\textbf{Remark.} This lemma depends on the choice of representatives.  In particular, it is not true for path representatives.

\subsection{A character formula for comb representatives}

It follows from Lemmas \ref{CharacterMultiplicative} and \ref{ClassMultiplicative} that to give the character value of $\chi^\lambda(u)$, we may assume $u-1$ has nonzero entries in one column and $G_\lambda$ has one connected component $S$.

\begin{theorem}  \label{TriangleCharacterFormula} Let $u\in U_{(m)}$ such that $u\in \cT_{(m)}^\vee$ and $u-1$ has support $\supp(u-1)\subseteq\{(1,k),(j,k)\}$.  Let $\lambda\in \cT_{(m)}$ be such that $\lambda$ has one connected component $S$ with $\Cols(S)=\{l_1<l_2<\cdots< l_s\}$.  Then

 \vspace{.25cm}
 
 \noindent (a) Let $i_1>i_2>\ldots>i_{s-1}$ be such that $\lambda_{i_dl_d}\neq 0$.  The character degree
 $$\chi^\lambda(1)=\left\{\begin{array}{ll}\dd q^{l_s-m-2}\prod_{d=1}^{s-1} q^{l_d-i_d-1}, & \text{if $\lambda_{il_s}=0$ for all $i>i_1$},\\ \dd q^{l_s-i-1}\prod_{d=1}^{s-1} q^{l_d-i_d-1}, & \text{if $\lambda_{il_s}\neq 0$ for some $i>i_1$.}\end{array}\right.$$

\vspace{.25cm}

\noindent (b)  The character 
$$\chi^\lambda(u)=0$$
unless at least one of the following occurs
\begin{enumerate}
\item[(1)]   $u_{jk}\lambda_{ik}\neq 0$ implies $i=1$ with $j\leq m$ or $i>j$; and $u_{1k}\lambda_{1l}+u_{jk}\lambda_{jl}=0$ for all $j<k<l$,
\item[(2)] $u_{jk}\lambda_{ik}\neq 0$ for some $1<i<j\leq m$, but $|R_k|=|C_k|>0$ ($R_k$ and $C_k$ are as in (\ref{RandCDefinition})), 
\item[(3)] $u_{1k}\lambda_{1l_s}+u_{jk}\lambda_{jl}\neq 0$ for some $m<k<l$, but $\lambda_{ik}=0$ for all $i$ and $|R_k|\geq |C_k|>0$,
\item[(4)] $u_{jk},\lambda_{jl_s},\lambda_{il_s}\in \FF_q^\times$ with $i<j'<k<l_s$ with $\lambda_{jk'}=0$ for all $k<k'<l_s$.
\end{enumerate}

\vspace{.25cm}

\noindent (c)  The character values are
$$\chi^\lambda(u)=\left\{\begin{array}{ll} \frac{\chi^\lambda(1)}{q^{|C_k|-\delta_{RC}}}\theta(u_{jk}\lambda_{jk}), & \text{if (1)}\\
\frac{\chi^\lambda(1)}{q^{|C_k|}}, & \text{if (2) or (3) or (4)}\\ 
\frac{\chi^\lambda(1)}{q^{|C_k|}}\theta\bigl(-\lambda_{il_s}^{-1}\lambda_{ik}(u_{1k}\lambda_{1l_s}+u_{jk}\lambda_{jl_s})\big), & \text{if (2) and (4),}\end{array}\right.$$
where $\delta_{RC}=1$ if $|C_k|>|R_k|$ and $\delta_{RC}=0$ if $|C_k|\leq |R_k|$.
\end{theorem}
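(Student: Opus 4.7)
The plan is to apply Theorem~\ref{GeneralCharacterFormula} to the pair $(\lambda,u)$ and to convert the resulting linear-algebra conditions into the combinatorial statements (a)--(c). Because $u-1$ is supported in a single column $k$ and $\lambda$ has one connected component $S$, the data $M$, $a$, $b$ of Theorem~\ref{GeneralCharacterFormula} are small enough to handle directly: each nonzero entry $M_{ij,k'l'}=u_{jk'}\lambda_{il'}$ forces $k'=k$ and $i,l'\in S$, and likewise $a_{i'j'}$ and $b_{j'k'}$ only involve $j'\in\{1,j\}$ and $k'=k$.

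For part (a), I would specialize to $u=1$, so that $M$, $a$, $b$ all vanish and $\chi^\lambda(1)=|U_{(m)}\lambda|$. To count this orbit I would decompose the tangent space $\{x\cdot\lambda:x\in\fkn_{(m)}\}$ by the row $p$ that $x$ acts on: the contribution from row $p$ is the rank of the submatrix $(\lambda_{iq})_{i\in R_p,\,q\in C_p}$, where $R_p=\{i:i<p\text{ in }\cP_{(m)}\}$ and $C_p=\{q:q>p\}$. The comb shape of $\lambda$ makes this rank transparent: each tooth $(i_d,l_d)$ supplies a unique pivot in column $l_d$ (contributing one dimension for every $p$ with $i_d<p<l_d$), and the back column $l_s$ supplies one additional dimension at each $p$ that can be reached in $\cP_{(m)}$ from a row with a nonzero entry in column $l_s$ after clearing the tooth pivots. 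In case~1 this extra dimension appears exactly for those $p$ above $m$ (so that row $1$ can act); in case~2 the additional entry $(i^*,l_s)$ with $i^*>i_1$ extends the reachable range down to $i^*<p<l_s$. Summing these rank contributions yields the stated exponents.

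For parts (b) and (c), I would expand $a$, $b$, $M$ in coordinates, indexing rows of $M$ by pairs $(i,j')$ with $u_{j'k}\lambda_{ik}\neq 0$ and columns by $(k,l)$ with $l$ a column of $S$. The criterion that $Mx=-a$ is solvable and $b\perp\Null(M)$ then splits according to where $u_{jk}$ sits relative to the comb: configuration (1) says $u_{jk}$ is ``directly compatible'' with $\lambda$ (so $a=0$ and the $b$-obstruction collapses to the pairings $u_{1k}\lambda_{1l}+u_{jk}\lambda_{jl}=0$); configuration (2) forces an $a$-obstruction to be absorbed by a tooth; configuration (3) handles when $u_{jk}$ lies beyond the rightmost column of the comb; and configuration (4) records the alignment among $\lambda_{jl_s}$, $\lambda_{il_s}$ and a lower tooth that makes the $b$-constraint satisfiable. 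In each surviving configuration I would pick an explicit $x$ solving $Mx=-a$, evaluate $\theta(x\cdot b)\theta\circ\lambda(u-1)$, and divide $|U\lambda|$ by $q^{\rank(M)}$ to read off $\chi^\lambda(u)/\chi^\lambda(1)$; the $\delta_{RC}$ correction in case~(1) comes from the different sizes of $R_k$ and $C_k$ when $u_{jk}$'s row already lies outside the comb.

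The main obstacle will be the simultaneous case (2)\,\&\,(4), where an $a$-obstruction must be absorbed by a tooth at the same time as the $b$-perpendicularity condition must be verified against a nontrivial null space. Arranging the bookkeeping so that the chosen solution $x$ to $Mx=-a$ produces precisely the factor $\theta\bigl(-\lambda_{il_s}^{-1}\lambda_{ik}(u_{1k}\lambda_{1l_s}+u_{jk}\lambda_{jl_s})\bigr)$ is the delicate step, and I would handle it by solving a single $2\times 2$ linear system arising from the two nontrivial columns of $M$ in that configuration and then verifying invariance under the kernel.
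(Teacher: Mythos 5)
Your proposal follows essentially the same route as the paper: both apply Theorem~\ref{GeneralCharacterFormula}, use the single-column support of $u-1$ and the comb structure of $\lambda$ to write down $M$, $a$, $b$ explicitly (the paper's displayed matrix (\ref{SpecificM})), exhibit an explicit solution $x$ of $Mx=-a$, compute $\rank(M)$ and $\Null(M)$ to get the $\delta_{RC}$ correction, and evaluate $\theta(x\cdot b)\theta\circ\lambda(u-1)$, with part (a) reduced to counting $|U_{(m)}\lambda|$. The delicate case (2)\,\&\,(4) that you flag is handled in the paper exactly as you propose, by the explicit choice of $x$ supported on the columns $(k,l_s)$ and $(k,l_d)$, which produces the factor $\theta\bigl(-\lambda_{il_s}^{-1}\lambda_{ik}(u_{1k}\lambda_{1l_s}+u_{jk}\lambda_{jl_s})\bigr)$.
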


\begin{proof}  (a) This is just a statement of the fact that
$$\chi^\lambda(1)=|U_{(m)} \lambda|,$$
combined with the structure of $S$.

(b) and (c). Note that by Lemma \ref{ClassMultiplicative},
$$\chi^{\lambda}(u)=\frac{\chi^\lambda(1)}{\chi^{\lambda[u,k]}(1)}\chi^{\lambda[u,k]}(u[k]),$$ 
so we may assume $M=M_{R_k,C_k}$ (see (\ref{MDecomposition})).  Let $\Rows(S)=\{i_1>\ldots> i_{s}\}$ or $\Rows(S)=\{i_0>i_1>\ldots>i_s\}$ be the rows with nonzero entries in $S$ such that $\lambda_{i_d l_d},\lambda_{i_d l_s}\in \FF_q^\times$, and, if $i_0\in \Rows(S)$, then $\lambda_{i_0l_s}\neq 0$ (see the definition of comb representatives in Section \ref{SectionSuperCharacterRepresentatives}).  Let $r$ and $r'$ be minimal such that $l_r>k$ and $i_{r'}<j$.  Then
\begin{equation}\label{SpecificM} 
M=\left(\begin{array}{ccccc}
&  & & & \delta'u_{jk}\lambda_{1l_s}\\ 
& & & u_{jk}\lambda_{i_{s-1}l_{s-1}} & u_{jk}\lambda_{i_{s-1}l_s}\\
 & & \adots & & \vdots \\
 & u_{jk}\lambda_{i_rl_r} & & &  u_{jk}\lambda_{i_rl_s}\\
& & & & u_{jk}\lambda_{i_{r-1}l_s}\\
& & & & \vdots\\
& & & & u_{jk} \lambda_{i_{r'}l_s}\end{array}\right), \qquad\text{where}\quad \delta'=\left\{\begin{array}{ll} 1, & \text{if $j>m$,}\\ 0, & \text{if $j\leq m$}\end{array}\right.\end{equation}
Thus, the rank of $M$ is $q^{|C_k|-\delta}$.

Furthermore, $a\in \FF_q^{|R_k|}$ and $b\in \FF_q^{|C_k|}$ are given by
\begin{align*}
a_{ij}&= u_{jk}\lambda_{ik}, & & \text{for $(i,j)\in R_k$,}\\
 b_{kl}&=\left\{\begin{array}{ll} u_{1k}\lambda_{1l}+u_{jk}\lambda_{jl}, & \text{if $l\in \Cols(S)$,}\\ 0 & \text{otherwise.}\end{array}\right. && \text{for $(k,l)\in C_k$.}
\end{align*}
If $a=0$ then $M\cdot 0=-0$ is easily satisfied, and if $b=0$ then $b\perp \Null(M)$ is also trivially satisfied.  Thus, $\chi^\lambda(u)\neq 0$ if $u_{jk}\lambda_{ik}=0$ for all $i<j<k$ in $\cP_{(m)}$ and $u_{1k}\lambda_{1l}+u_{jk}\lambda_{jl}=0$ for all $1<j<k<l$.  Note that in the poset $\cP_{(m)}$, $1\nleq j$ for $j\leq m$.

Suppose $a_{ij}\neq 0$.  Note that $Mx=-a$ can only be satisfied if row $(i,j)$ of $M$ has a nonzero element.  That is, there exists $i<j<k<l$ such that $\lambda_{il}\neq 0$.  Consequently, we may assume  $k<l_s$.  If $j>m$, then $\delta=1$, so $(Mx)_{1j}\neq 0$ if and only if $(Mx)_{ij}\neq 0$.  However, $a_{1j}\neq 0$ and $(Mx)_{1j}\neq 0$ implies the first row of $\lambda$ has two nonzero elements, contradicting the structure of $S$.  Thus, if $a_{ij}\neq 0$ and $Mx=-a$ for some $x$, then $j\leq m$ and $k<l_s$.  

Suppose $a_{ij}=u_{jk}\lambda_{ik}\neq 0$ with $j\leq m$ and $k<l_s$.  By (\ref{SpecificM}), $(i,k)=(i_{r-1},l_{r-1})$.  Note that $(Mx)_{ij}\neq 0$ if and only if $(Mx)_{i_{r'}j}\neq 0$.  Since $u_{jk'}=0$ for all $k'\neq k$, in this case $r'=r-1$ or $|C_k|=|R_k|$.   If we choose $x$ such that 
$$x_{kl}=\left\{\begin{array}{ll} -\lambda_{il_s}^{-1}\lambda_{ik}, & \text{if $l=l_s$,}\\ \lambda_{i_dl_d}^{-1}\lambda_{i_dl_s}\lambda_{il_s}^{-1}\lambda_{ik} , & \text{if $l=l_d$,}\\ 0,  & \text{otherwise,}\end{array}\right. \qquad\text{where $(k,l)\in C_k$,}$$
then $Mx=-a$.

If $b_{kl}\neq 0$ and $M$ has no nonzero entry in column $(k,l)$, then $b$ is not perpendicular to $\Null(M)$.  Thus, if $b_{kl}\neq 0$ we must have $\lambda_{jl},\lambda_{il}\in \FF_q^\times$ with $i<j$.  In particular, $j\leq m$, and either $u_{1k}=0$ or $u$ has two nonzero elements.  Since only the last column of $S$ can have more than one nonzero entry, $l=l_s$, and $b_{kl_s}=u_{1k}\lambda_{1l_s}+u_{jk}\lambda_{jl_s}$.  Note that
$$\dim(\Null(M))=\left\{\begin{array}{ll} s-r, & \text{if $\delta'=0$, $r'=r$,}\\ 0, & \text{otherwise}.\end{array}\right.$$
It follows that when $b\neq 0$, then $b$ is perpendicular to $\Null(M)$ if and only if $r'>r$ if and only if $|R_k|\geq |C_k|$ (if $\delta'=1$, then $j>m$).  

In the case that $j=i_{r-2}$ and $k=l_{r-1}$, we have
$$\theta(x\cdot b)=\theta\bigl(-\lambda_{il_s}^{-1}\lambda_{ik}(u_{1k}\lambda_{1l_s}+u_{jk}\lambda_{jl_s})\bigr),\qquad \text{where $i=i_{r-1}$.}$$
Otherwise, $\theta(x\cdot b)=1$.  
\end{proof}

At this point, it may be helpful to give a more visual interpretation of the conditions in Theorem \ref{TriangleCharacterFormula} by considering the configurations of superimposed graphs $G_\lambda$ and $G_u$.  Recall, for $\lambda\in \cZ_{(m)}\cup\cT_{(m)}$ there is at most one connected component of $G_\lambda$ that can have more than one element (or can have a vertex in the first row of $\lambda$).   Therefore, for $\lambda\in \cZ_{(m)}\cup\cT_{(m)}$, let 
\begin{align*}
S_\lambda&=\text{the connected component of $G_\lambda$ that has a vertex in the first row}\\
\cend(\lambda)&=\left\{\begin{array}{ll} \dd\min\{k\ \mid\ \text{$S_\lambda$ has a vertex in column $k$}\}, 
& \text{if $S_\lambda\neq\emptyset$,}\\ 0, & \text{otherwise,}\end{array}\right.\\
\br(\lambda)&=\left\{\begin{array}{ll} \dd\max\{j\ \mid\ \text{$S_\lambda$ has a vertex in row $j$}\}, 
& \text{if $S_\lambda\neq\emptyset$,}\\ n, & \text{otherwise,}\end{array}\right.\\
\wt(\lambda)&=\left\{\begin{array}{ll}\#(\text{Nonzero entries in row $\br(\lambda)$ of $\lambda$})-1, & \text{if $S_\lambda\neq \emptyset$},\\ 0, & \text{otherwise.}\end{array}\right.
\end{align*}
For example, if
$$\lambda=
\left(\begin{array}{cccccc}  0 & 0 & 0 & 0 & 0 & a\\
				            0 & 0 & 0 & 0 & c & b\\
				            0 & 0 & 0 & e & 0 & 0\\
				            0 & 0 & 0 & 0 & 0 & d\\
				            0 & 0 & 0 & 0 & 0 & 0\\
				            0 & 0 & 0 & 0 & 0 & 0\end{array}\right),\quad\text{then}\quad
S_\lambda=\xy<0cm,.9cm>\xymatrix@R=.35cm@C=.35cm{ *={} & *{a} \ar @{-} [d]  \\
 *{c}   & *{b} \ar @{-} [l] \ar @{-} [dd]\\
 *={} \\
 *={} & *{d}}\endxy,\quad \begin{array}{l} \cend(\lambda)=5,\\ \br(\lambda)=4,\\ \wt(\lambda)=0.\end{array}$$

In the following discusion, we will suppress the values of the vertices and distinguish between $G_{u}$ and $G_\lambda$ by the following conventions,
$$\begin{array}{|l|c|c|} \hline & G_{u-1} & G_\lambda  \\  \hline
 \text{Vertices} & \bullet & \sss\blacksquare\\  \hline
 \text{Edges} & \xymatrix@C=.5cm@R=.2cm{*={\bullet}\ar @{:} [r] & *={\bullet}} & \xymatrix@C=.5cm@R=.2cm{*={\sss\blacksquare}\ar @{-} [r] & *={ \sss\blacksquare} } \\
 \hline\end{array}$$
If $|S_\lambda|>1$ and $\wt(\lambda)=0$, then add an edge to the non-zero vertex of row $\br(\lambda)$ that extends West of this vertex,
$$\xy<0cm,.9cm>\xymatrix@R=.35cm@C=.35cm{*={} & *={} & *={} & *={} & *={\sss\blacksquare} \ar @{-} [dddd]  \\
*={} & *={} & *={} & *={\sss\blacksquare}   & *={\sss\blacksquare} \ar @{-} [l] \\
*={} & *={} & *={} & *={} \\
*={}   & *={\sss\blacksquare} & *={}  &*={} & *={\sss\blacksquare} \ar @{-} [lll] \\
*={} & & *={}& *={} & *={\sss\blacksquare}  \\  *{}}\endxy\ \longmapsto\ 
\xy<0cm,.9cm>\xymatrix@R=.35cm@C=.35cm{*={} & *={} & *={} & *={} & *={\sss\blacksquare} \ar @{-} [dddd]  \\
*={} & *={} & *={} & *={\sss\blacksquare}   & *={\sss\blacksquare} \ar @{-} [l] \\
*={} & *={} & *={} & *={} \\
*={}   & *={\sss\blacksquare} & *={}  &*={} & *={\sss\blacksquare} \ar @{-} [lll] \\
*={} & & *={}& *={} & *={\sss\blacksquare} \ar @{-|} [llll] \\  *{}}\endxy\ ,$$
thereby ``completing" the comb.   

Vertices of $G_u$ \emph{see} North in their column and East in their row, while vertices of $G_\lambda$ \emph{see} South in their column and West in their row (in both cases they do not see the location they are in).  That is,
$$\xy<0cm,.25cm>\xymatrix@R=.5cm@C=.5cm{*={} & *={}\\ *={\bullet} \ar [r] \ar [u]  & *={}}\endxy \qquad\text{and}\qquad 
\xy<0cm,.25cm>\xymatrix@R=.5cm@C=.5cm{*={} & *={\sss\blacksquare}\ar [d] \ar [l]\\ *={} & *={}}\endxy\ .$$
Connected component $S$ of $G_u$ and $T$ of $G_\lambda$ \emph{see one-another} if when one superimposes their matrices, a vertex of $S$ sees a vertex of $T$ (and vice-versa).  

The \emph{tines} of $S_\lambda$ are the pairs of horizontal edges with their leftmost vertices.  For example, the tines of 
$$\xy<0cm,.9cm>\xymatrix@R=.35cm@C=.35cm{*={} & *={} & *={} & *={} & *={\sss\blacksquare} \ar @{-} [dddd]  \\
*={} & *={} & *={} & *={\sss\blacksquare}   & *={\sss\blacksquare} \ar @{-} [l] \\
*={} & *={} & *={} & *={} \\
*={}   & *={\sss\blacksquare} & *={}  &*={} & *={\sss\blacksquare} \ar @{-} [lll] \\
*={} & & *={}& *={} & *={\sss\blacksquare} \ar @{-|} [llll] \\  *{}}\endxy \qquad\text{are}\qquad 
\xy<0cm,.9cm>\xymatrix@R=.35cm@C=.35cm{*={} & *={} & *={} & *={} & *={\sss\blacksquare}  \\
*={} & *={} & *={} & *={\sss\blacksquare}   & *={} \ar @{-} [l] \\
*={} & *={} & *={} & *={} \\
*={}   & *={\sss\blacksquare} & *={}  &*={} & *={} \ar @{-} [lll] \\
*={} & & *={}& *={} & *={} \ar @{-|} [llll] \\  *{}}\endxy\ .$$

Suppose $u\in U_{(m)}$ has at most two nonzero superdiagonal entries $u_{1k}, u_{jk}\in \FF_q$, for some $1\leq k\leq n$, and suppose $\lambda\in \fkn_{(m)}^*$ such that $G_\lambda$ has exactly one connected component $S$.  Then column $k$ of $u$ is \emph{comb compatible} with $S$ if the following conditions are satisfied.
\begin{enumerate}
\item[(CC1)] If column $k$ of $u$ has exactly one nonzero entry $u_{jk}$ in column $k$ and $S\neq S_\lambda$, then $u_{jk}$ cannot see $S$,
$$\underbrace{\begin{array}{cc} & \sss\blacksquare\\ \bullet & \end{array},\ 
\begin{array}{cc} \bullet & \\ \sss\blacksquare & \end{array},\ 
\begin{array}{cc} & \\ \sss\blacksquare & \bullet \end{array}}_{\text{compatible}},\ \underbrace{
\begin{array}{cc}\bullet &\sss\blacksquare \\  &  \end{array},\ 
\begin{array}{cc} & \sss\blacksquare  \\& \bullet \end{array}
}_{\text{not compatible}}.$$
\item[(CC2)]  If $u_{1k},u_{jk}\in \FF_q^\times$, with $1<j$ and $S\neq S_\lambda$, then $S$ cannot see $u_{ik}$ or $u_{jk}$,
 $$\underbrace{\begin{array}{cc} \bullet & \\
 & \sss\blacksquare\\ \bullet & \end{array},\ 
\begin{array}{cc} \bullet & \\ \bullet & \\ \sss\blacksquare & \end{array},\ 
\begin{array}{cc} & \bullet  \\ \sss\blacksquare & \bullet \end{array}}_{\text{compatible}},\ \underbrace{
\begin{array}{cc}\bullet &\sss\blacksquare \\ \bullet  &  \end{array},\ 
\begin{array}{cc} & \bullet  \\ & \sss\blacksquare  \\& \bullet \end{array}
}_{\text{not compatible}}.$$
\item[(CC3)]  If column $k$ of $u$ has exactly one nonzero entry $u_{jk}$ in column $k$ and $S=S_\lambda$, then $u_{jk}$ sees $S$ if and only if $j\leq m$, $u_{jk}$ is South of the end of a tine and weakly North of the next tine to the South (if there is another tine),
$$\underbrace{
\xy<0cm,.9cm>\xymatrix@R=.35cm@C=.35cm{*={} & *={} & *={} & *={} & *={\sss\blacksquare} \ar @{-} [dddd]  \\
*={} & *={} & *={} & *={\sss\blacksquare}   & *={\sss\blacksquare} \ar @{-} [l] \\
*={} & *={} & *={\bullet} & *={} \\
*={}   & *={\sss\blacksquare} & *={}  &*={} & *={\sss\blacksquare} \ar @{-} [lll] \\
*={} & & *={}& *={} & *={\sss\blacksquare} \ar @{-|} [llll] \\  *{}}\endxy\ ,\ 
\xy<0cm,.9cm>\xymatrix@R=.35cm@C=.35cm{*={} & *={} & *={} & *={} & *={\sss\blacksquare} \ar @{-} [dddd]  \\
*={} & *={} & *={} & *={\sss\blacksquare}   & *={\sss\blacksquare} \ar @{-} [l] \\
*={} & *={} & *={} & *={\bullet} \\
*={}   & *={\sss\blacksquare} & *={}  &*={} & *={\sss\blacksquare} \ar @{-} [lll] \\
*={} & & *={}& *={} & *={\sss\blacksquare} \ar @{-|} [llll] \\  *{}}\endxy\ ,\ 
\xy<0cm,.9cm>\xymatrix@R=.35cm@C=.35cm{*={} & *={} & *={} & *={} & *={\sss\blacksquare} \ar @{-} [dddd]  \\
*={} & *={} & *={} & *={\sss\blacksquare}   & *={\sss\blacksquare} \ar @{-} [l] \\
*={} & *={} & *={} & *={} \\
*={}   & *={\sss\blacksquare} & *={}  &*={\bullet} & *={\sss\blacksquare} \ar @{-} [lll] \\
*={} & & *={}& *={} & *={\sss\blacksquare} \ar @{-|} [llll] \\  *{}}\endxy}_{\text{compatible}}\ ,\ 
\underbrace{\xy<0cm,.9cm>\xymatrix@R=.35cm@C=.35cm{*={} & *={} & *={} & *={} & *={\sss\blacksquare} \ar @{-} [dddd]  \\
*={} & *={} & *={\bullet} & *={\sss\blacksquare}   & *={\sss\blacksquare} \ar @{-} [l] \\
*={} & *={} & *={} & *={} \\
*={}   & *={\sss\blacksquare} & *={}  &*={} & *={\sss\blacksquare} \ar @{-} [lll] \\
*={} & & *={}& *={} & *={\sss\blacksquare} \ar @{-|} [llll] \\  *{}\save "2,2"."2,4"*+<.25cm>[F-:<3pt>]\frm{}\restore}\endxy\ ,\ 
\xy<0cm,.9cm>\xymatrix@R=.35cm@C=.35cm{*={} & *={} & *={} & *={} & *={\sss\blacksquare} \ar @{-} [dddd]  \\
*={} & *={} & *={} & *={\sss\blacksquare}   & *={\sss\blacksquare} \ar @{-} [l] \\
*={} & *={} & *={} & *={} \\
*={}   & *={\sss\blacksquare} & *={}  &*={} & *={\sss\blacksquare} \ar @{-} [lll] \\
*={} & & *={\bullet}& *={} & *={\sss\blacksquare} \ar @{-|} [llll] \\  *{}\save "1,3"."5,3"*+<.25cm>[F-:<3pt>]\frm{}\restore}\endxy\ ,\ 
\xy<0cm,.9cm>\xymatrix@R=.35cm@C=.35cm{*={} & *={} & *={} & *={} & *={\sss\blacksquare} \ar @{-} [dddd]  \\
*={} & *={} & *={} & *={\sss\blacksquare}   & *={\sss\blacksquare} \ar @{-} [l] \\
*={} & *={} & *={} & *={} \\
*={}   & *={\sss\blacksquare} & *={}  &*={} & *={\sss\blacksquare} \ar @{-} [lll] \\
*={} & & *={} & *={\bullet} & *={\sss\blacksquare} \ar @{-|} [llll] \\  *{}\save "4,1"."4,5"*+<.25cm>[F-:<3pt>]\frm{}\restore}\endxy}_{\text{not compatible}}.$$
\item[(CC4)] If $u_{1k},u_{jk}\in \FF_q^\times$, with $1<j$ and $S=S_\lambda$, then $S$ sees $u_{1k}$ or $u_{jk}$ if and only if either $u_{jk}$ is not South of the end of a tine but on a tine of $S$, or $u_{jk}$ is South of the end of a tine and weakly North of the next tine to the South,
$$\underbrace{
\xy<0cm,.9cm>\xymatrix@R=.35cm@C=.35cm{*={} & *={} & *={\bullet} & *={} & *={\sss\blacksquare} \ar @{-} [dddd]  \\
*={} & *={} & *={} & *={\sss\blacksquare}   & *={\sss\blacksquare} \ar @{-} [l] \\
*={} & *={} & *={} & *={} \\
*={}   & *={\sss\blacksquare} & *={\bullet}  &*={} & *={\sss\blacksquare} \ar @{-} [lll] \\
*={} & & *={}& *={} & *={\sss\blacksquare} \ar @{-|} [llll] \\  *{}}\endxy\ ,\ 
\xy<0cm,.9cm>\xymatrix@R=.35cm@C=.35cm{*={} & *={} & *={} & *={\bullet} & *={\sss\blacksquare} \ar @{-} [dddd]  \\
*={} & *={} & *={} & *={\sss\blacksquare}   & *={\sss\blacksquare} \ar @{-} [l] \\
*={} & *={} & *={} & *={\bullet} \\
*={}   & *={\sss\blacksquare} & *={}  &*={} & *={\sss\blacksquare} \ar @{-} [lll] \\
*={} & & *={}& *={} & *={\sss\blacksquare} \ar @{-|} [llll] \\  *{}}\endxy\ ,\ 
\xy<0cm,.9cm>\xymatrix@R=.35cm@C=.35cm{*={} & *={} & *={} & *={\bullet} & *={\sss\blacksquare} \ar @{-} [dddd]  \\
*={} & *={} & *={} & *={\sss\blacksquare}   & *={\sss\blacksquare} \ar @{-} [l] \\
*={} & *={} & *={} & *={} \\
*={}   & *={\sss\blacksquare} & *={}  &*={\bullet} & *={\sss\blacksquare} \ar @{-} [lll] \\
*={} & & *={}& *={} & *={\sss\blacksquare} \ar @{-|} [llll] \\  *{}}\endxy}_{\text{compatible}}\ ,\ 
\underbrace{\xy<0cm,.9cm>\xymatrix@R=.35cm@C=.35cm{*={} & *={} & *={\bullet} & *={} & *={\sss\blacksquare} \ar @{-} [dddd]  \\
*={} & *={} & *={} & *={\sss\blacksquare}   & *={\sss\blacksquare} \ar @{-} [l] \\
*={} & *={} & *={\bullet} & *={} \\
*={}   & *={\sss\blacksquare} & *={}  &*={} & *={\sss\blacksquare} \ar @{-} [lll] \\
*={} & & *={}& *={} & *={\sss\blacksquare} \ar @{-|} [llll] \\  *{}\save "1,2"."1,5"*+<.25cm>[F-:<3pt>]\frm{}\restore}\endxy\ ,\ 
\xy<0cm,.9cm>\xymatrix@R=.35cm@C=.35cm{*={} & *={} & *={} & *={} & *={\bullet\hspace{-.23cm}\ss\square} \ar @{-} [dddd]  \\
*={} & *={} & *={} & *={\sss\blacksquare}   & *={\sss\blacksquare} \ar @{-} [l] \\
*={} & *={} & *={} & *={} & *={\bullet} \\
*={}   & *={\sss\blacksquare} & *={}  &*={} & *={\sss\blacksquare} \ar @{-} [lll] \\
*={} & & *={}& *={} & *={\sss\blacksquare} \ar @{-|} [llll] \\  *{}\save "2,3"."2,5"*+<.25cm>[F-:<3pt>]\frm{}\restore}\endxy\ ,\ 
\xy<0cm,.9cm>\xymatrix@R=.35cm@C=.35cm{*={} & *={} & *={} & *={\bullet} & *={\sss\blacksquare} \ar @{-} [dddd]  \\
*={} & *={} & *={} & *={\sss\blacksquare}   & *={\sss\blacksquare} \ar @{-} [l] \\
*={} & *={} & *={} & *={} \\
*={}   & *={\sss\blacksquare} & *={}  &*={} & *={\sss\blacksquare} \ar @{-} [lll] \\
*={} & & *={} & *={\bullet} & *={\sss\blacksquare} \ar @{-|} [llll] \\  *{}\save "4,1"."4,5"*+<.25cm>[F-:<3pt>]\frm{}\restore}\endxy}_{\text{not compatible}}.$$
\end{enumerate}

From this point of view, Theorem \ref{TriangleCharacterFormula} translates to the following corollary.

\begin{corollary}  \label{CombCharacterFormula} Suppose $u\in U_{(m)}$ has at most two nonzero superdiagonal entries $u_{1k}, u_{jk}\in \FF_q$, for some $1\leq k\leq n$.  For $\lambda\in\cT_{(m)}$, suppose $G_\lambda$ has one connected component $S$.  Then 
 \vspace{.25cm}
 
 \noindent (a) The character degree
 $$\chi^\lambda(1)=\left\{\begin{array}{ll} |\{i<j\in \cP_{(m)}\ \mid\ \text{$\lambda_{ik}\neq 0$, for $k>j>i>1$, $\lambda_{ik'}\neq 0$ implies $k'\geq k$}\}|, & \text{if $\wt(\lambda)=0$,}\\
|\{i<j\in \cP_{(m)}\ \mid\ \text{$\lambda_{ik}\neq 0$, for $k>j>i$, $\lambda_{ik'}\neq 0$ implies $k'\geq k$}\}|, & \text{if $\wt(\lambda)=1$.} \end{array}\right.$$

\vspace{.25cm}

\noindent (b)  The character 
$$\chi^\lambda(u)=0$$
unless column $k$ of $u$ and $S$ are comb compatible and in condition (CC4) if $u_{1k}$ sees $S$ and $u_{jk}$ is not  strictly South and weakly East of the end of a tine, 
\begin{equation}\label{CombSpecialCondition}\xy<0cm,.9cm>\xymatrix@R=.35cm@C=.35cm{*={} & *={} & *{\ss u_{1k}} & *={} & *{\ss\lambda_{1l}} \ar @{-} [ddd]  \\
*={} & *={} & *={} & *={\sss\blacksquare}   & *={\sss\blacksquare} \ar @{-} [l] \\
*={} & *={} & *={} & *={} \\
*={}   & *={\sss\blacksquare} & *{\ss u_{jk}} \ar @{-} [l]  &*={} & *{\ss\lambda_{jl}} \ar @{-} [ll] \ar @{-} [d] \\
*={} & & *={}& *={} & *={\sss\blacksquare} \ar @{-|} [llll] \\  *{}}\endxy,\qquad\text{then}\qquad\text{$u_{1k}\lambda_{1l} + u_{jk}\lambda_{il}=0$.} \end{equation}

\vspace{.25cm}

\noindent (c)  If $\chi^\lambda(u)\neq 0$, then
$$\chi^\lambda(u)= \chi^\lambda(1)\frac{\theta(u_{1k}\lambda_{1k}+u_{jk}\lambda_{jk})}{q^{c(u,\lambda)}}\prod_{i<j<l\atop \lambda_{il},\lambda_{jl}\in \FF_q^\times}\theta(-\lambda_{il}^{-1} \lambda_{ik}(u_{1k}\lambda_{1l}+u_{jk}\lambda_{jl}))$$
where 
$$c(u,\lambda)=\left\{\begin{array}{ll}
|\{l>k\ \mid\ \lambda_{il}\neq 0, \text{ for some $i<j$}\}|, & \text{if $u_{jk}\neq 0$, $j>m$,}\\
|\{l>k\ \mid\ \lambda_{il}\neq 0, \text{ for some $i<j$}\}|,& \text{if  $u_{jk},\lambda_{ij'} \lambda_{il}\in \FF_q^\times$ with $j\leq m$, $i<j'<k<l$,}\\
|\{l>k\ \mid\ \lambda_{il}\neq 0, \text{ for some $i<j$}\}|-1, & \text{otherwise.}
\end{array}\right.$$
\end{corollary}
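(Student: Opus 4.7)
The plan is to show that this corollary is nothing more than a visual repackaging of Theorem~\ref{TriangleCharacterFormula}, using the multiplicative reductions in Corollary~\ref{CharacterMultiplicative} and Lemma~\ref{ClassMultiplicative}. Concretely, Corollary~\ref{CharacterMultiplicative} factors $\chi^\lambda(u)$ over the connected components $T_1,\ldots,T_l$ of $G_\lambda$, and Lemma~\ref{ClassMultiplicative} then factors each $\chi^{\lambda[T_i]}(u)$ column-by-column over the columns of $u$. Since each connected component other than $S_\lambda$ lies entirely below row $1$, for the non-$S_\lambda$ components the analysis reduces to the $U_n$ case covered by the standard supercharacter formula. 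So the whole problem reduces to the single-column, single-component setup of Theorem~\ref{TriangleCharacterFormula}, which is precisely the hypothesis of the corollary.

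For part (a), I would reinterpret the exponents of Theorem~\ref{TriangleCharacterFormula}(a) as the dimension of the left $U_{(m)}$-orbit of $\lambda$. The dichotomy in the theorem (whether or not $\lambda_{il_s}\neq 0$ for some $i>i_1$) is exactly the dichotomy $\wt(\lambda)=0$ versus $\wt(\lambda)=1$; the case $\wt(\lambda)=1$ loses the factor $q^{-(m+1)+i}$ because the row $\br(\lambda)$ already contains two nonzero entries, so the lowest tooth cannot be freely pushed. The count $|\{i<j \text{ in } \cP_{(m)}\ :\ \cdots\}|$ just tallies the entries that can be produced by row operations from a comb representative, which gives the exponent of $q$.

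For parts (b) and (c), I will match the algebraic conditions (1)--(4) of Theorem~\ref{TriangleCharacterFormula}(b) against the pictorial conditions (CC1)--(CC4). When $S\neq S_\lambda$, condition (1) forces $u_{jk}$ not to see $S$ when $j>m$ (since then both $\lambda_{ij}=0$ and $\lambda_{jl}=0$ are required), yielding (CC1)--(CC2). When $S=S_\lambda$, the allowed positions for $u_{jk}$ sitting South of a tine end and North of the next tine correspond to conditions (2)--(3) of the theorem (where $|R_k|=|C_k|$ or $|R_k|\geq|C_k|$); any $u_{jk}$ not in such a strip either makes $a\notin\mathrm{Im}(M)$ or makes $b\not\perp\Null(M)$, hence $\chi^\lambda(u)=0$. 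These give (CC3)--(CC4). The special condition (\ref{CombSpecialCondition}) isolates the configuration where $u_{1k}$ and $u_{jk}$ both see a non-end vertex of the tine in row $l$ with no admissible $x$-solution unless the linear relation $u_{1k}\lambda_{1l}+u_{jk}\lambda_{jl}=0$ holds — this is exactly the $b=0$ requirement from condition (1) of the theorem.

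Finally, for the value formula in (c), I would identify $c(u,\lambda)$ with $|C_k|$ or $|C_k|-1$: the $-1$ appears exactly when $\delta_{RC}=1$ in Theorem~\ref{TriangleCharacterFormula}(c), which occurs in the generic comb-compatible case and not when $u_{jk}$ is South of a tine end ($j\leq m$) nor when $j>m$. The factor $\theta(u_{1k}\lambda_{1k}+u_{jk}\lambda_{jk})$ is $\theta\circ\lambda(u[k]-1)$, and the product $\prod \theta(-\lambda_{il}^{-1}\lambda_{ik}(u_{1k}\lambda_{1l}+u_{jk}\lambda_{jl}))$ is $\theta(x\cdot b)$ using the explicit $x$ constructed in the proof of Theorem~\ref{TriangleCharacterFormula}. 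The main obstacle will be the case analysis in the translation of (b): carefully checking that each of the ``not compatible'' pictures in (CC1)--(CC4) and the condition (\ref{CombSpecialCondition}) really exhausts all obstructions from Theorem~\ref{GeneralCharacterFormula}, and that no extra spurious vanishing is introduced. This is bookkeeping rather than genuine new content, but the number of sub-configurations (particularly for (CC4) combined with (\ref{CombSpecialCondition})) makes it the most error-prone step.
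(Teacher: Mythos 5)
Your proposal is in essence correct and matches what the paper actually does, since the paper offers no separate proof of Corollary~\ref{CombCharacterFormula} beyond the sentence ``From this point of view, Theorem~\ref{TriangleCharacterFormula} translates to the following corollary.'' The translation from the algebraic conditions (1)--(4) into the pictorial conditions (CC1)--(CC4), the identification of $c(u,\lambda)$ with $|C_k|$ or $|C_k|-\delta_{RC}$, and the reading of the product of $\theta$'s as $\theta(x\cdot b)$ with the explicit $x$ from the proof of Theorem~\ref{TriangleCharacterFormula} are exactly the intended dictionary.

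Two small caveats. First, the opening paragraph invoking Corollary~\ref{CharacterMultiplicative} and Lemma~\ref{ClassMultiplicative} is logically out of place: the hypotheses of the corollary already put you in the one-column, one-component setting of Theorem~\ref{TriangleCharacterFormula}, so the multiplicativity lemmas are what make the corollary \emph{usable} for general $u,\lambda$, not part of proving it. Second, be careful with the direction of the $\wt(\lambda)$ dichotomy in part (a): since $\wt(\lambda)$ is one less than the number of nonzero entries in row $\br(\lambda)$, the theorem's first case ($\lambda_{il_s}=0$ for all $i>i_1$, so the bottom row of $S_\lambda$ holds two nonzero entries) corresponds to $\wt(\lambda)=1$ and exponent $l_s-m-2$, while the second case (an extra spine entry in row $i>i_1$ alone) corresponds to $\wt(\lambda)=0$. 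Your phrasing ``loses the factor $q^{-(m+1)+i}$'' points the right way but you should state the bijection of cases explicitly to avoid a sign error in the count. Finally, as you note, the matching of (CC4) together with condition (\ref{CombSpecialCondition}) against the $b\perp\Null(M)$ requirement is the genuinely error-prone part; your plan identifies it correctly but does not carry it out, so that step still needs to be written down case by case.
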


\subsection{A character formula for path representatives}

For $\lambda\in \cZ_{(m)}\cup\cT_{(m)}$, let $S_\lambda$, $\lc(\lambda)$, $\br(\lambda)$, and $\wt(\lambda)$ be as in the previous section.  For $\lambda\in \cZ_{(m)}$, order the vertices of $S_\lambda$ starting with the vertex in the first row and proceeding along the path to the vertex  in position $(\br(\lambda),\cend(\lambda))$.  For example, if
$$\lambda=
\left(\begin{array}{cccccc}  0 & 0 & 0 & 0 & 0 & a\\
				            0 & 0 & 0 & 0 & c & b\\
				            0 & 0 & 0 & e & 0 & 0\\
				            0 & 0 & 0 & 0 & d & 0\\
				            0 & 0 & 0 & 0 & 0 & 0\\
				            0 & 0 & 0 & 0 & 0 & 0\end{array}\right),\quad\text{then}\quad
S_\lambda=\xy<0cm,.9cm>\xymatrix@R=.35cm@C=.35cm{ *={} & *{a} \ar @{-} [d]  \\
 *{c} \ar @{-} [dd]   & *{b} \ar @{-} [l] \\
 *={} \\
 *{d}}\endxy,\quad \begin{array}{l} \cend(\lambda)=5,\\ \br(\lambda)=4,\\ \wt(\lambda)=0.\end{array}$$

Before translating from comb representatives to path representatives, we will add some decorations to the graphs $G_{u-1}$ and $G_\lambda$ of Section \ref{TheInterpolatingGroups}.  We will again suppress the values of the vertices and distinguish between $G_{u-1}$ and $G_\lambda$ by the following conventions,
$$\begin{array}{|l|c|c|} \hline & G_{u-1} & G_\lambda  \\  \hline
 \text{Vertices} & \bullet & \sss\blacksquare\\  \hline
 \text{Edges} & \xymatrix@C=.5cm@R=.2cm{*={\bullet}\ar @{:} [r] & *={\bullet}} & \xymatrix@C=.5cm@R=.2cm{*={\sss\blacksquare}\ar @{-} [r] & *={ \sss\blacksquare} } \\
 \hline\end{array}$$

If $S_\lambda\neq \emptyset$, then add an additional edge from the vertex in position $(\br(\lambda),\cend(\lambda))$.  If $\wt(\lambda)=0$ and $|S_\lambda|\neq 1$ extend the edge West until it reaches just pasat the $(m+1)$th column.  If $\wt(\lambda)=1$ or $|S_\lambda|=1$, then extend the edge South until just past the $m$th row.   For example,
$$\left(\begin{array}{c|c} 
\begin{array}{ccc} 1 & & \\ & \ddots & \\ & & 1\end{array} & \xy<0cm,.7cm>\xymatrix@R=.35cm@C=.35cm{*={} & *={} & *={} & *={} & *={\sss \blacksquare} \ar @{-} [d]  \\
*={} & *={} & *={} & *={\sss \blacksquare} \ar @{-} [d] \ar @{} [ddll]|(.7){\adots}  & *={\sss \blacksquare} \ar @{-} [l] \\
*={} & *={}  & *={\sss \blacksquare} & *={\sss \blacksquare} \ar @{-} [l] \\
 *={\sss \blacksquare} \ar @{-} [d] & *={\sss \blacksquare} \ar @{-} [l] & *={}\\ *={\sss \blacksquare}\save[]-<.5cm,0cm> *={} \ar @{|-} [] \\ *{}}\endxy \\ \hline
0 &\begin{array}{ccc} 1 & & \\ & \ddots & \\ & & 1\end{array} 
\end{array}\right),\ 
\left(\begin{array}{c|c} 
\begin{array}{ccc} 1 & & \\ & \ddots & \\ & & 1\end{array} & \xy<0cm,.7cm>\xymatrix@R=.35cm@C=.35cm{*={} & *={} & *={} & *={} & *={\sss \blacksquare} \ar @{-} [d]  \\
*={} & *={} & *={} & *={\sss \blacksquare} \ar @{-} [d] \ar @{} [ddll]|(.7){\adots}  & *={\sss \blacksquare} \ar @{-} [l] \\
*={} & *={}  & *={\sss \blacksquare} & *={\sss \blacksquare} \ar @{-} [l] \\
 *={\sss \blacksquare} 
& *={\sss \blacksquare} \ar @{-} [l] & *={}\\ *={}\save[]-<0cm,.15cm> \ar @{|-} [u] \restore \\ *{}}\endxy \\ \hline
0 &\begin{array}{ccc} 1 & & \\ & \ddots & \\ & & 1\end{array} 
\end{array}\right),\ \text{or}\ \left(\xy<0cm,1cm>\xymatrix@R=.7cm@C=.4cm{*={} & *={} & *={} & *={} & *={\sss \blacksquare}  \ar @{-|} [d]  \\
*={} & *={} & *={}  & *={} & *={} & *{}\\
*{} \\ *{}}\endxy\right).$$
A \emph{bottom corner} of $\lambda$ is a vertex $v$ in $S_\lambda$ with a horizontal edge extending West of $v$.  A \emph{top corner}  of $\lambda$ is a vertex $v$ in $S_\lambda$ with a vertical edge extending South of $v$.
All vertices of $G_\lambda$ which are not in $S_\lambda$ are considered to be both top and bottom corners.

Similarly, if $u\in \cZ_{(m)}^\vee$, then $G_{u-1}$ has at most one connected component $S_u$ that has a vertex in the first row. Order the vertices of $S_u$ starting with the vertex in the first row, and proceeding along the path.  If $|S_u|>1$, add an edge to $S_u$ by extending an edge from the last vertex in the opposite direction of the previous edge (either East or North). Furthermore, if $|S_u|>1$, then view the first edge as not being in the same plane as the matrix, so it no longer crosses any edges that are in the plane of the matrix. Thus, $S_u$ will be of the form,
$$\left(\begin{array}{c|c} 
\begin{array}{ccc} 1 & & \\ & \ddots & \\ & & 1\end{array} & \xy<0cm,.7cm>\xymatrix@R=.35cm@C=.35cm{*={\bullet} \ar @{:} @(ul,d) [dddd] & *={} & *={} & *={}  \\
*={} & *={} & *={} & *={} & *={\bullet} \ar @{:} [d] \ar @{} [ddll]|(.7){\adots}  \save[]+<.5cm,0cm> *={} \ar @{|:} []\restore \\
*={} & *={} & *={}  & *={\bullet} & *={\bullet} \ar @{:} [l] \\
*={}& *={\bullet} \ar @{:} [r] & *={\bullet} \\ *={\bullet} \ar@{:} [r] & *={\bullet}\ar @{:} [u]\\ *{}}\endxy \\ \hline
0 &\begin{array}{ccc} 1 & & \\ & \ddots & \\ & & 1\end{array} 
\end{array}\right)
\qquad \text{or} \qquad 
\left(\begin{array}{c|c} 
\begin{array}{ccc} 1 & & \\ & \ddots & \\ & & 1\end{array} & \xy<0cm,.7cm>\xymatrix@R=.35cm@C=.35cm{*={\bullet} \ar @{:} @(ul,d)  [dddd] & *={} & *={} & *={}  & *={}  & *={}\save[]+<0cm,.15cm> \ar @{|:} [d] \restore\\
*={} & *={} & *={} & *={} & *={\bullet} \ar @{:} [d] \ar @{} [ddll]|(.7){\adots}  & *={\bullet}\ar @{:}[l] \\
*={} & *={} & *={}  & *={\bullet} & *={\bullet} \ar @{:} [l] \\
*={}& *={\bullet} \ar @{:} [r] & *={\bullet} \\ *={\bullet} \ar@{:} [r] & *={\bullet}\ar @{:} [u]\\ *{}}\endxy \\ \hline
0 &\begin{array}{ccc} 1 & & \\ & \ddots & \\ & & 1\end{array} 
\end{array}\right).$$
The \emph{left corners} of $u$ are the leftmost nonzero entries in the rows of $u-1$.  The \emph{right corners} of $u$ are the rightmost nonzero entries in the rows of $u-1$.  

Left and right corners \emph{see} North in their column and East in their row, while top and bottom corners \emph{see} South in their column and West in their row (in both cases they do not see the location they are in).  That is,
$$\xy<0cm,.25cm>\xymatrix@R=.5cm@C=.5cm{*={} & *={}\\ *={\bullet} \ar [r] \ar [u]  & *={}}\endxy \qquad\text{and}\qquad 
\xy<0cm,.25cm>\xymatrix@R=.5cm@C=.5cm{*={} & *={\sss\blacksquare}\ar [d] \ar [l]\\ *={} & *={}}\endxy\ .$$
Connected components $S$ of $G_u$ and $T$ of $G_\lambda$ \emph{see one-another} if when one superimposes their matrices, a corner of $S$ sees a corner of $T$.

Fix $u\in \cZ_{(m)}^\vee$ and $\lambda\in \cZ_{(m)}$ with a connected component $S$ of $G_u$ and $T$ of $G_\lambda$.  The components $S$ and $T$ are \emph{path compatible} if the following conditions are satisfied.
\begin{enumerate}
\item[(PC1)]  If $S\neq S_u$ and $T\neq S_\lambda$, then $S$ cannot see $T$,
$$\underbrace{\begin{array}{cc} & \sss\blacksquare\\ \bullet & \end{array},\ 
\begin{array}{cc} \bullet & \\ \sss\blacksquare & \end{array},\ 
\begin{array}{cc} & \\ \sss\blacksquare & \bullet \end{array}}_{\text{compatible}},\ \underbrace{
\begin{array}{cc}\bullet &\sss\blacksquare \\  &  \end{array},\ 
\begin{array}{cc} & \sss\blacksquare  \\& \bullet \end{array}
}_{\text{not compatible}}.$$
\item[(PC2)]  If $S=S_u$ and $T\neq S_\lambda$, then $S$ sees $T$ if and only if $T$ touches a vertical edge of $S$ and no left corner of $S$ sees $T$.
$$\underbrace{\xy<0cm,.9cm>\xymatrix@R=.35cm@C=.35cm{*={\bullet} \ar @{:} @(ul,d) [dddd] & *={}  \\
*={} & *={} & *={\bullet} \ar @{:|} [r] & *={}  \\
*={} & *={} &  *={}   \\
*={}& *={\bullet} \ar @{:} [r] & *={\bullet} \ar @{:} [uu]\\ *={\bullet} \ar@{:} [r] & *={\bullet}\ar @{:} [u]\\ *={} & *={} & *={\sss\blacksquare}}\endxy,\qquad 
\xy<0cm,.9cm>\xymatrix@R=.35cm@C=.35cm{*={\bullet} \ar @{:} @(ul,d) [dddd] & *={}   \\
*={} & *={} & *={\bullet} \ar @{:|} [r] & *={}   \\
*={} & *={} &  *={\sss\blacksquare}   \\
*={}& *={\bullet} \ar @{:} [r] & *={\bullet} \ar @{:} [uu]\\ *={\bullet} \ar@{:} [r] & *={\bullet}\ar @{:} [u]\\ *={} }\endxy,\qquad 
\xy<0cm,.9cm>\xymatrix@R=.35cm@C=.35cm{*={\bullet} \ar @{:} @(ul,d) [dddd] & *={}  \\
*={} & *={} & *={\bullet\hspace{-.23cm}{\ss\square}} \ar @{:|} [r] & *={}   \\
*={} & *={} &  *={}   \\
*={}& *={\bullet} \ar @{:} [r] & *={\bullet} \ar @{:} [uu]\\ *={\bullet} \ar@{:} [r] & *={\bullet}\ar @{:} [u]\\ *={} }\endxy}_{\text{compatible}},\qquad 
\underbrace{\xy<0cm,.9cm>\xymatrix@R=.35cm@C=.35cm{*={\bullet} \ar @{:} @(ul,d) [dddd] & *={}  \\
*={} & *={} & *={\bullet} \ar @{:|} [r] & *={}   \\
*={} & *={} &  *={}   \\
*={}& *={\bullet} \ar @{:} [r] & *={\bullet} \ar @{:} [uu]\\ *={\bullet} \ar@{:} [r] & *={\bullet}\ar @{:} [u] & *={\sss\blacksquare} \\ *={} \save "5,1"."5,3"*+<.25cm>[F-:<3pt>]\frm{}\restore}\endxy,\qquad 
\xy<0cm,.9cm>\xymatrix@R=.35cm@C=.35cm{*={\bullet} \ar @{:} @(ul,d) [dddd] & *={}  \\
*={} & *={} & *={\bullet} \ar @{:|} [r] & *={}   \\
*={} & *={} &  *={}  & *={} \\
*={}& *={\bullet} \ar @{:} [r] & *={\bullet\hspace{-.23cm}{\ss \square}} \ar @{:} [uu]& *={}\\ *={\bullet} \ar@{:} [r] & *={\bullet}\ar @{:} [u]\\ *={} \save "4,1"."4,3"*+<.25cm>[F-:<3pt>]\frm{}\restore}\endxy,\qquad 
\xy<0cm,.9cm>\xymatrix@R=.35cm@C=.35cm{*={\bullet} \ar @{:} @(ul,d) [dddd] & *={}  \\
*={} & *={\sss\blacksquare} & *={\bullet} \ar @{:|} [r] & *={}   \\
*={} & *={} &  *={}   \\
*={}& *={\bullet} \ar @{:} [r] & *={\bullet} \ar @{:} [uu]\\ *={\bullet} \ar@{:} [r] & *={\bullet}\ar @{:} [u]\\ *={}\save "1,2"."4,2"*+<.25cm>[F-:<3pt>]\frm{}\restore }\endxy}_{\text{not compatible}}.$$
\item[(PC3)] If $S\neq S_u$ and $T=S_\lambda$, then $S$ sees $T$ if and only if $S$ touches a vertical edge of $T$ and no bottom corner of $T$  sees $S$.
$$\underbrace{\xy<0cm,.9cm>\xymatrix@R=.35cm@C=.35cm{*={} & *={} & *={} & *={} & *={\sss\blacksquare} \ar @{-} [d]  \\
*={} & *={} & *={} & *={\sss\blacksquare} \ar @{-} [dd]   & *={\sss\blacksquare} \ar @{-} [l] \\
*={} & *={} & *={} & *={\bullet} \\
*={}   & *={\sss\blacksquare} \ar @{-} [d]& *={}  & *={\sss\blacksquare} \ar @{-} [ll] \\
*={} & *={\sss\blacksquare} \ar @{-|} [l] & *={}\\  *{}}\endxy,\ 
\xy<0cm,.9cm>\xymatrix@R=.35cm@C=.35cm{*={} & *={} & *={} & *={} & *={\sss\blacksquare} \ar @{-} [d]  \\
*={} & *={} & *={} & *={\sss\blacksquare} \ar @{-} [dd]   & *={\sss\blacksquare} \ar @{-} [l] \\
*={} & *={} & *={} & *={} \\
*={}   & *={\sss\blacksquare} \ar @{-} [d]& *={}  & *={\bullet\hspace{-.23cm}{\ss\square}} \ar @{-} [ll] \\
*={} & *={\sss\blacksquare} \ar @{-|} [l] & *={}\\  *{}}\endxy,\ 
\xy<0cm,.9cm>\xymatrix@R=.35cm@C=.35cm{*={} & *={} & *={} & *={} & *={\sss\blacksquare} \ar @{-} [d]  \\
*={} & *={} & *={} & *={\sss\blacksquare} \ar @{-} [dd]   & *={\sss\blacksquare} \ar @{-} [l] \\
*={} & *={\bullet} & *={} & *={} \\
*={}   & *={\sss\blacksquare} \ar @{-} [d]& *={}  & *={\sss\blacksquare} \ar @{-} [ll]  \\
*={} & *={\sss\blacksquare} \ar @{-|} [l] & *={}& *={} \\  *{}}\endxy}_{\text{compatible}},\ 
\underbrace{\xy<0cm,.9cm>\xymatrix@R=.35cm@C=.35cm{*={} & *={} & *={} & *={} & *={\sss\blacksquare} \ar @{-} [d]  \\
*={} & *={} & *={} & *={\bullet\hspace{-.23cm}\ss\square} \ar @{-} [dd]   & *={\sss\blacksquare} \ar @{-} [l] \\
*={} & *={} & *={} & *={} \\
*={}   & *={\sss\blacksquare} \ar @{-} [d]& *={}  & *={\sss\blacksquare} \ar @{-} [ll] \\
*={} & *={\sss\blacksquare} \ar @{-|} [l] & *={}\\  *{} \save "2,3"."2,5"*+<.25cm>[F-:<3pt>]\frm{}\restore}\endxy,\ 
\xy<0cm,.9cm>\xymatrix@R=.35cm@C=.35cm{*={} & *={} & *={} & *={} & *={\sss\blacksquare} \ar @{-} [d]  \\
*={} & *={} & *={\bullet} & *={\sss\blacksquare} \ar @{-} [dd]   & *={\sss\blacksquare} \ar @{-} [l] \\
*={} & *={} & *={} & *={} \\
*={}   & *={\sss\blacksquare} \ar @{-} [d]& *={}  & *={\sss\blacksquare} \ar @{-} [ll] \\
*={} & *={\sss\blacksquare} \ar @{-|} [l] & *={}\\  *{} \save "2,2"."2,4"*+<.25cm>[F-:<3pt>]\frm{}\restore}\endxy,\ 
\xy<0cm,.9cm>\xymatrix@R=.35cm@C=.35cm{*={} & *={} & *={} & *={} & *={\sss\blacksquare} \ar @{-} [d]  \\
*={} & *={} & *={} & *={\sss\blacksquare} \ar @{-} [dd]   & *={\sss\blacksquare} \ar @{-} [l] \\
*={} & *={} & *={} & *={} \\
*={}   & *={\sss\blacksquare} \ar @{-} [d] & *={}  & *={\sss\blacksquare} \ar @{-} [ll] \\
*={} & *={\sss\blacksquare} \ar @{-|} [l] &  *={} & *={\bullet}\\  *{} \save "3,4"."5,4"*+<.25cm>[F-:<3pt>]\frm{}\restore}\endxy}_{\text{not compatible}}. $$ 
\item[(PC4)] If $S=S_u$ and $T=S_\lambda$, then $S$ sees $T$ if and only if $T$ is never strictly South of $S$; $S$ ends weakly East of  the beginning of $T$;  and left corners of $S$ and bottom corners of $T$ only see one-another horizontally. 
$$\underbrace{\xy<0cm,.7cm>\xymatrix@R=.35cm@C=.35cm{*={} & *={\bullet} \ar @{:} @(ul,d) [ddddd] & *={} & *={} & *={} & *={} & *={\sss \blacksquare} \ar @{-} [dd]   & *={}\save[]+<0cm,.15cm> \ar @{|:} [dd] \restore\\
*={} & *={} & *={} & *={} & *={} & *={}  & *={}  & *={}\\
*={} & *={} &*={\sss \blacksquare} \ar @{-} [dd] & *={} & *={} & *={\bullet} \ar @{:} [d] & *={\sss \blacksquare} \ar@{-} [llll] & *={\bullet}\ar @{:}[ll] \\
*={} & *={} & *={\bullet} \ar @{:} [dd] & *={} & *={} & *={\bullet} \ar @{:} [lll] & *={}  \\
*={\sss\blacksquare}  & *={} & *={\sss\blacksquare} \ar @{-} [ll] \\ 
*={}\save[]-<0cm,.25cm> \ar @{|-} [u]\restore & *={\bullet} \ar@{:} [r] & *={\bullet}}\endxy\ , 
 \qquad
 \xy<0cm,.7cm>\xymatrix@R=.35cm@C=.35cm{ *={\bullet} \ar @{:} @(ul,d) [ddddd] & *={} & *={} & *={} & *={} & *={} & *={} &  *={\sss \blacksquare} \ar @{-} [d]  \\
 *={} & *={} & *={} & *={} & *={}  & *={\sss\blacksquare} \ar @{-} [d]  & *={\bullet} \ar @{:} [d]  \ar @{:|} [rrr] & *={\sss\blacksquare} \ar @{-} [ll] & *= {} & *={} &*={} \\
 *={} &*={\sss \blacksquare} \ar @{-} [dd] & *={} & *={} & *={\bullet} \ar @{:} [d] & *={\sss \blacksquare} \ar@{-} [llll] & *={\bullet}\ar @{:}[ll] \\
  *={} & *={}  & *={\bullet} \ar @{:} [dd]  & *={} & *={\bullet} \ar @{:} [ll] & *={}  \\
 *={} \save[]-<.5cm,0cm> *={} \ar @{|-} [r]\restore & *={\sss \blacksquare}  \\ 
 *={\bullet} \ar@{:} [rr] & *={} & *={\bullet}}\endxy}_{\text{compatible}},$$ 
$$\underbrace{ \xy<0cm,.7cm>\xymatrix@R=.35cm@C=.35cm{ *={\bullet} \ar @{:} @(ul,d) [ddddd] & *={} & *={} & *={} & *={} & *={} & *={} &  *={\sss \blacksquare} \ar @{-} [d]  \\
 *={} & *={} & *={} & *={} & *={}  & *={\sss\blacksquare} \ar @{-} [d]  & *={\bullet} \ar @{:} [d]  \ar @{:|} [rrr] & *={\sss\blacksquare} \ar @{-} [ll] & *={} & *={}  \\
 *={} & *={} & *={} & *={\sss \blacksquare} \ar @{-} [dd]   & *={\bullet} \ar @{:} [d] & *={\sss \blacksquare} \ar@{-} [ll] & *={\bullet}\ar @{:}[ll] \\
  *={} & *={}  & *={\bullet} \ar @{:} [dd]  & *={} & *={\bullet} \ar @{:} [ll] & *={}  \\
 *={} \save[]-<.25cm,0cm> *={} \ar @{|-} [rrr]\restore &*={} & *={} & *={\sss \blacksquare}  \\ 
 *={\bullet} \ar@{:} [rr] & *={} & *={\bullet}\save "3,2"."5,4"*+<.25cm>[F-:<3pt>]\frm{}\restore}\endxy,
\quad 
\xy<0cm,.7cm>\xymatrix@R=.35cm@C=.35cm{*={} & *={\bullet} \ar @{:} @(ul,d) [ddddd] & *={} & *={} & *={} & *={} & *={\sss \blacksquare} \ar @{-} [d]   & *={}\\
*={} & *={} & *={} & *={\sss \blacksquare} & *={} & *={}  & *={\sss \blacksquare} \ar@{-} [lll]  & *={}\\
*={} & *={} & *={}  & *={} & *={} & *={} & *={} & *={} \\
*={} & *={} & *={} & *={} & *={} & *={} & *={}  \\
*={}  & *={} & *={} \\ 
*={} & *={\bullet} \ar@{:} [r] & *={\bullet}\ar @{:|}[uuuuu] & *={} \save[]-<0cm,.25cm> \ar @{|-} [uuuu]\restore  \save "1,2"."1,7"*+<.25cm>[F-:<3pt>]\frm{}\restore}\endxy,\quad
\xy<0cm,.7cm>\xymatrix@R=.35cm@C=.35cm{*={} & *={\bullet} \ar @{:} @(ul,d) [ddddd] & *={} & *={} & *={} & *={} & *={\sss \blacksquare} \ar @{-} [d]   & *={}\save[]+<0cm,.15cm> \ar @{|:} [dd] \restore\\
*={} & *={} & *={\sss \blacksquare} \ar @{-} [d] & *={} & *={} & *={}  & *={\sss \blacksquare} \ar@{-} [llll]  & *={}\\
*={\sss\blacksquare} & *={} & *={\sss\blacksquare} \ar @{-} [ll]  & *={} & *={} & *={\bullet} \ar @{:} [d] & *={} & *={\bullet}\ar @{:}[ll] \\
*={} & *={} & *={\bullet} \ar @{:} [dd] & *={} & *={} & *={\bullet} \ar @{:} [lll] & *={}  \\
*={}  & *={} & *={} \\ 
*={}\save[]-<0cm,.25cm> \ar @{|-} [uuu]\restore & *={\bullet} \ar@{:} [r] & *={\bullet}\save "2,3"."4,3"*+<.25cm>[F-:<3pt>]\frm{}\restore}\endxy}_{\text{not compatible}}.$$
\end{enumerate}
Note that (PC1)-(PC4) are translations of (CC1)-(CC4) via the correspondence (\ref{PathToCombCharacter}).

\begin{corollary}\label{PathCharacterFormula} Let $u\in U_{(m)}$ be such that $u-1\in \cZ_{(m)}^\vee$, and let $\lambda\in \cZ_{(m)}$.  Then
\begin{enumerate}
\item[(a)] The character
$$\chi^\lambda(u)=0$$
unless the connected components of $G_{u-1}$ and $G_\lambda$ are pairwise path compatible, and in the superimposed matrices, every
\begin{equation}\label{HorizontalTouch} \xy<0cm,.5cm>\xymatrix@R=.5cm@C=1cm{*={} & *={} & *={}\\
*={} & *{x_j} \ar @{:>} [rr]|(.55){\hspace{.4cm}} \ar @{} [r]^{\text{strict}} \ar @{:>}[d]  & *{y_k} \ar  [u]\ar [ll]|(.55){\hspace{.4cm}}  & *={}\\
*={} & *={} }\endxy\qquad \text{implies} \qquad \mathrm{bag}(x_j)\mathrm{bag}(y_k)=1.\end{equation}
\item[(b)] The character degree
$$\chi^\lambda(1)=\left\{\begin{array}{ll} |\{i<j\in \cP_{(m)}\  \mid\ \text{$\lambda_{ik}$ is a bottom corner for some $k>j$}\}|, & \text{if $\wt(\lambda)=0$,}\\  |\{i<j\in \cP_{(m)}\  \mid\ \text{$\lambda_{ik}$ is a top corner for some $k>j$}\}|, & \text{if $\wt(\lambda)=1$}.\end{array}\right.$$
\item[(c)]  If $\chi^\lambda(u)\neq 0$, then
$$\chi^\lambda(u)=\chi^\lambda(1) \theta(\lambda(u-1))\prod_{\text{left corners}\atop u_{jk}}\frac{1}{q^{\#\left\{\begin{array}{@{}c@{}}\ss\text{bottom corners $\lambda_{il}$}\\ \ss\text{with $i<j<k<l$}\end{array}\right\}}}\prod_{
\xymatrix@R=.35cm@C=.35cm{*={} & *{\ss y_k} \ar @{-} [d] & *={}\\
*={} & *={\bullet\hspace{-.2cm}{\ss\Box}} \ar [l] \ar @{:>} [r] &  *={}\\
*={} & *{\ss x_j} \ar @{:} [u] } 
\text{or} 
\xymatrix@R=.35cm@C=.35cm{*={} & *{\ss y_1} \ar @{-} [d] & *={}\\
*={} & *={\sss{\blacksquare}} \ar [l]  &  *={}\\
*={} & *{\ss x_j} \ar @{:|} [uu] } }\theta(\mathrm{bag}(x_j)\mathrm{bag}(y_k)).$$
\end{enumerate}
\end{corollary}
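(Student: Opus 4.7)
The plan is to deduce Corollary \ref{PathCharacterFormula} from Corollary \ref{CombCharacterFormula} via the explicit change-of-representative correspondences (\ref{PathToCombClass}) and (\ref{PathToCombCharacter}). Given $u\in \cZ_{(m)}^\vee$ and $\lambda\in \cZ_{(m)}$, I would let $u'\in \cT_{(m)}^\vee$ be the comb superclass representative equivalent to $u$ under (\ref{PathToCombClass}) and $\lambda'\in \cT_{(m)}$ be the comb supercharacter representative giving the same supercharacter as $\lambda$ under (\ref{PathToCombCharacter}). Since the supercharacter $\chi^\lambda=\chi^{\lambda'}$ and $u,u'$ represent the same superclass, we have $\chi^\lambda(u)=\chi^{\lambda'}(u')$, so it suffices to evaluate the right-hand side using Corollary \ref{CombCharacterFormula}.

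Before invoking Corollary \ref{CombCharacterFormula}, I would first reduce by multiplicativity. By Corollary \ref{CharacterMultiplicative}, $\chi^\lambda(u)$ factors as a product over connected components of $G_\lambda$, so we may assume $G_\lambda$ has at most one nontrivial connected component $S_\lambda$. Applying Lemma \ref{ClassMultiplicative} to the combed pair $(u',\lambda')$ further factors the value column by column, so each nonzero column of $u'-1$ can be treated separately against $S_{\lambda'}$. This exactly sets up the hypothesis of Corollary \ref{CombCharacterFormula}.

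The main work is then the translation of conditions and factors. The compatibility conditions (PC1)--(PC4) are the images of the comb conditions (CC1)--(CC4) under the correspondence (\ref{PathToCombCharacter}), and matching them is essentially pictorial: a ``bottom corner'' (resp.\ ``top corner'') of a path representative $\lambda$ corresponds to an entry in the last column (resp.\ to a tine vertex above the last column) of its comb image $\lambda'$, and the ``left corners'' of $u$ become the lone column entries of $u'$. Consequently the degree counts in part~(b) match directly, and the exponent $c(u',\lambda')$ appearing in Corollary \ref{CombCharacterFormula} becomes the sum of the exponents $\#\{\text{bottom corners } \lambda_{il} : i<j<k<l\}$ taken over left corners $u_{jk}$ in the path formula.

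The hardest step will be recovering the baggage factors $\theta(\mathrm{bag}(x_j)\mathrm{bag}(y_k))$ in part~(c) and verifying the vanishing clause (\ref{HorizontalTouch}). Under (\ref{PathToCombCharacter}), the last column of $\lambda'$ contains the entries $-y_1\mathrm{bag}(y_j)$ rather than the original path entries $y_j$, and similarly (\ref{PathToCombClass}) replaces entries $x_j$ by $\mathrm{bag}(x_j)$ in the top row of $u'$. When a left-corner column of $u'$ (coming from some $x_j$) pairs with a last-column entry of $\lambda'$ (coming from some $y_k$) in the manner described by the diagrams under the product in~(c), the factor $\theta(u_{jk}\lambda_{jk})=\theta(\mathrm{bag}(x_j)\mathrm{bag}(y_k))$ from Corollary \ref{CombCharacterFormula}(c) produces the desired baggage term. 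The subtle special case is exactly (\ref{CombSpecialCondition}) in the comb picture: an $x_j$ that sees a tine past the end corresponds in the path picture to the ``strict'' horizontal/vertical crossing in (\ref{HorizontalTouch}), and the constraint $u_{1k}\lambda_{1l}+u_{jk}\lambda_{jl}=0$ translates, after substituting the baggage expressions, precisely to $\mathrm{bag}(x_j)\mathrm{bag}(y_k)=1$. Checking this translation carefully, together with confirming that no stray $q$-power or sign is dropped in the substitution, is where the bulk of the verification lies.
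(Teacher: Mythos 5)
Your proposal follows essentially the same route as the paper: the paper also deduces Corollary \ref{PathCharacterFormula} directly from Corollary \ref{CombCharacterFormula} by passing through the correspondences (\ref{PathToCombClass}) and (\ref{PathToCombCharacter}), identifying the comb condition (\ref{CombSpecialCondition}) with the path condition (\ref{HorizontalTouch}) via $\bag(x_j)\bag(y_k)=1$, matching the exponent $c(u,\lambda)$ with the product over left corners, and checking that the $\theta$-factors produced by the substituted baggage entries agree in the two degenerate touching configurations. The key observations you single out are exactly the ones the paper records, so the proposal is correct and not materially different.
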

\begin{proof}  This corollary follows directly from Corollary \ref{CombCharacterFormula} with the following observations, using (\ref{PathToCombCharacter}).  

(a) If a bottom corner of $T$ sees a left corner of $S$ horizontally, then we are in the situation of (\ref{HorizontalTouch}), so
$$ \xy<0cm,.7cm>\xymatrix@R=.35cm@C=.35cm{  *={} & *={} & *={} & *={} & *+<1pt>{\ss y_1} \ar @{-} [d]   \\
*={} & *={} & *={} & *={\sss\blacksquare} \ar @{-} [d] & *={\sss\blacksquare} \ar @{-} [l]   \\
*={} & *={} & *={\sss\blacksquare}  \ar @{-} [d] & *={\sss\blacksquare} \ar @{-} [l]   \\
*={} & *+<1pt>{\ss x_j} \ar @{:} [d] \ar @{:} [r] \ar @{-} [l] & *+<1pt>{\ss y_k} \ar @{-} [l]\ar @{:} [r]  & *={} \\
*={} & *={\bullet}}\endxy
\longleftrightarrow 
 \xy<0cm,.7cm>\xymatrix@R=.35cm@C=.35cm{  *={} & *+<1pt>{\ss\bag(x_{j-1})} \ar @{:} [ddd]\ar @{:} [rr] & *={} & *={\cdot} & *+<1pt>{\ss y_1} \ar @{-} [ddd]   \\
*={} & *={} & *={} & *={\sss\blacksquare}  & *={\sss\blacksquare} \ar @{-} [l]   \\
*={} & *={} & *={\sss\blacksquare}   & *={} & *={\sss\blacksquare} \ar @{-} [ll]   \\
*={} & *+<1pt>{\ss x_j}   \ar @{-} [l] & *={} & *={}   & *+<1pt>{\ss -y_1\bag(y_k)}\ar @{-} [lll]\\
*={} & *={}}\endxy\ ,
 $$
so the comb representations of  $\lambda$ and $u$ must satisfy condition (\ref{CombSpecialCondition}).  However, this is equivalent to $\bag(x_j)\bag(y_k)=1$.  Thus, Corollary \ref{CombCharacterFormula} (a) is satisfied if and only if Corollary \ref{PathCharacterFormula} (a) is satisfied.  
 
 (b) is straight-forward translation of the combinatorics.  
 
 (c) First note that
$$\prod_{\text{left corners}\atop u_{jk}}\frac{1}{q^{\#\left\{\begin{array}{@{}c@{}}\ss\text{bottom corners $\lambda_{il}$}\\ \ss\text{with $i<j<k<l$}\end{array}\right\}}}=\prod_{k} \frac{1}{q^{c(u,\lambda)}}.$$
If $\chi^\lambda(u)\neq 0$, and we have no configurations of the form
\begin{align}\label{DegenerateEnding} \xy<0cm,.4cm>\xymatrix@R=.35cm@C=.35cm{  
 *={} & *+<1pt>{\ss y_1} \ar @{-} [d]   \\
 *={\sss\blacksquare}  & *={\sss\blacksquare} \ar @{-} [l]   \\
*={} & *+<1pt>{\ss x_j} \ar @{:|} [uu] \ar @{:} [l] }\endxy
&\qquad \longleftrightarrow\qquad
\xy<0cm,.4cm>\xymatrix@R=.35cm@C=.35cm{  
 *={} & *+<1pt>{\ss y_1 \bag(x_j)} \ar @{-} [d]  \ar @{:} [l] \\
 *={\sss\blacksquare}  & *={\sss\blacksquare} \ar @{-} [l]   \\
 *={}}\endxy\\
\xy<0cm,.7cm>\xymatrix@R=.35cm@C=.35cm{  
*={} & *={} & *={} & *+<1pt>{\ss y_1}\ar @{-} [d]   \\
*={} & *={} & *={\sss\blacksquare}  \ar @{-} [d] & *={\sss\blacksquare} \ar @{-} [l]   \\
*={} & *+<1pt>{\ss y_k} \ar @{-} [d] & *={\sss\blacksquare} \ar @{-} [l]  \\
*={\cdot} & *={\bullet\hspace{-.23cm}\ss\square} \ar @{:} [r]\ar @{-} [l] & *={\cdot}\\
*={} & *+<1pt>{\ss x_j} \ar @{:} [u]}\endxy
&\quad \longleftrightarrow\quad
\xy<0cm,.7cm>\xymatrix@R=.35cm@C=.35cm{  
*={} & *+<1pt>{\ss \bag(x_j)} \ar @{:} [dd]\ar @{:} [r]  & *={} & *+<1pt>{\ss y_1} \ar @{-} [dd]   \\
*={} & *={} & *={\sss\blacksquare}  & *={\sss\blacksquare} \ar @{-} [l]   \\
*={} & *+<1pt>{\ss y_k} \ar @{:}[d] & *={} & *+<1pt>{\ss-y_1\bag(y_{k-1})} \ar @{-} [ll] \ar @{-} [d] \\
*={\cdot} & *+<1pt>{\ss x_{j+1}} \ar @{-} [l] & *={} & *+<1pt>{\ss -y_1\bag(y_{k+1})} \ar @{-} [ll] \\ 
*={} & *={} }\endxy\ , \label{PathsTouching}\end{align}
then both character formulas are equal.  If (\ref{DegenerateEnding}) occurs then both the path character formula and the comb character formula get a factor of 
$$\theta(\bag(x_jy_1)).$$
If (\ref{PathsTouching}) occurs then the path character formula gets a factor 
$$\theta(y_{k+1}x_{j+1}) \theta(\bag(x_j)\bag(y_k)),$$
and the comb character formula gets a factor of 
$$\theta\left(-(-y_1\bag(y_{k-1})^{-1}y_k\big(\bag(x_j)y_1+x_{j+1}(-y_1\bag(y_{k+1})\big)\right),$$
However, a simple computation shows that these two quantities are equal.  Thus, the character formulas for the two types of representatives are equal.
\end{proof}

\subsection{Example}

 Let $n=7$, $m=4$,
$$u=
\left(\begin{array}{ccccccc}
1 & 0 & 0 & 0 & a & 0 & 0\\
0 & 1 & 0 & 0 & 0 & d & 0\\
0 & 0 & 1 & e & 0 & 0 & 0\\
0 & 0 & 0 & 1 & b & c & 0\\
0 & 0 & 0 & 0 & 1 & 0 & 0\\
0 & 0 & 0 & 0 & 0 & 1 & 0\\
0 & 0 & 0 & 0 & 0 & 0 & 1\\
\end{array}\right)
\qquad \text{and}\qquad \lambda=
\left(\begin{array}{ccccccc}
0 & 0 & 0 & 0 & 0 & 0 & x\\
0 & 0 & 0 & 0 & 0 & 0 & y\\
0 & 0 & 0 & 0 & 0 & 0 & 0\\
0 & 0 & 0 & 0 & z & 0 & 0\\
0 & 0 & 0 & 0 & 0 & 0 & 0\\
0 & 0 & 0 & 0 & 0 & 0 & 0\\
0 & 0 & 0 & 0 & 0 & 0 & 0\\
\end{array}\right),$$
so the decorated superimposed matrix of $u-1$ and $\lambda$ is
$$\left(\ \xy<0cm,1.7cm>\xymatrix@R=.3cm@C=.3cm{
*{0} & *{0} & *{0} & *{0} & *{a} \ar @{:} @(ul,d) [ddd] & *{0} & *{x}\ar @{-} [d]\\
*{0} & *{0} & *{0} & *{0} & *{0} & *{d} \ar @{:|} [r]+<.25cm,0cm> & *{y}\ar @{-|} [lll]-<.2cm,0cm>\\
*{0} & *{0} & *{0} & *{e} & *{0} & *{0} & *{0}\\
*{0} & *{0} & *{0} & *{0} & *{b\hspace{-.05cm}z} \ar @{:} [r] & *{c}\ar @{:} [uu] & *{0}\\
*{0} & *{0} & *{0} & *{0} & *{0} & *{0} & *{0}\\
*{0} & *{0} & *{0} & *{0} & *{0} & *{0} & *{0}\\
*{0} & *{0} & *{0} & *{0} & *{0} & *{0} & *{0}
}\endxy\ \right)$$
All the connected components are pairwise compatible,
$$\xy<0cm,1cm>\xymatrix@R=.3cm@C=.3cm{
 *{} & *{a} \ar @{:} @(ul,d) [ddd] & *{} & *{x}\ar @{-} [d]\\
 *{} & *{} & *{d} \ar @{:|} [r]+<.25cm,0cm> & *{y}\ar @{-|} [lll]-<.2cm,0cm>\\
 *{} & *{} & *{} & *{}\\
*{} & *{b} \ar @{:} [r] & *{c}\ar @{:} [uu] & *{}
}\endxy,\qquad 
\xy<0cm,1cm>\xymatrix@R=.3cm@C=.3cm{
 *{} & *{} & *{} & *{x}\ar @{-} [d]\\
 *{} & *{} & *{} & *{y}\ar @{-|} [lll]-<.2cm,0cm>\\
 *{e} & *{} & *{} & *{}\\
*{} & *{} & *{} & *{}
}\endxy,\qquad
\xy<0cm,1cm>\xymatrix@R=.3cm@C=.3cm{
 *{} & *{a} \ar @{:} @(ul,d) [ddd] & *{} & *{}\\
 *{} & *{} & *{d} \ar @{:|} [r]+<.25cm,0cm> & *{}\\
 *{} & *{} & *{} & *{}\\
*{} & *{b\hspace{-.05cm}z} \ar @{:} [r] & *{c}\ar @{:} [uu] & *{}
}\endxy,\qquad
\xy<0cm,1cm>\xymatrix@R=.3cm@C=.3cm{
 *{} & *{} & *{} & *{}\\
 *{} & *{} & *{} & *{}\\
 *{e} & *{} & *{} & *{}\\
*{} & *{z}  & *{} & *{}
}\endxy$$
Thus, 
$$\chi^\lambda(u)=0$$
if and only if $\bag(d)\bag(y)=1$ if and only if $d(-c^{-1})b(-a^{-1})y(-x^{-1})=1$.  If $\chi^\lambda(u)\neq 0$, then
$$\chi^\lambda(u)=q^4\theta(bz)\frac{1}{q^3} \cdot 1.$$

\section{Interpolating from $U_n$ to $U_{n-1}$}

This section uses a restriction rule from $U_{(m-1)}$ to $U_{(m)}$ for supercharacters to deduce a restriction rule from $U_{n}$ to $U_{n-1}$.

\subsection{A restriction rule for $U_{(m)}$}

Note that if $\lambda\in\cZ_{(m-1)}$, then $\lambda\in \cZ_{(m)}$ unless $\cend(\lambda)=m$.  Let $\lambda^\downarrow\in \cZ_{(m)}$ be given by
$$\lambda^\downarrow=\left\{\begin{array}{ll} \lambda, & \text{if $\cend(\lambda)\neq m$,}\\ \lambda-\lambda_{im}e_{im} & \text{if $\cend(\lambda)=m$ and $i$ is minimal so $\lambda_{im}\neq 0$.}\end{array}\right.$$
Then $\lambda^\downarrow$ is the path orbit representative of $U_{(m)}\lambda U_{(m)}$.

Similarly, if $\lambda\in\cT_{(m-1)}$, then $\lambda\in \cT_{(m)}$ unless $\cend(\lambda)=m$.  Let $\lambda^\downarrow\in \cT_{(m)}$ be given by
$$\lambda^\downarrow=\left\{\begin{array}{ll} \lambda, & \text{if $\cend(\lambda)\neq m$,}\\ \lambda-\lambda_{im}e_{im} & \text{if $\cend(\lambda)=m$, $wt(\lambda)=1$, and $\lambda_{im}\in S_\lambda$,}\\
\lambda-\lambda_{im}e_{im}+\lambda_{jm}e_{jm}-\lambda_{jl}e_{jl}, & \text{if $\cend(\lambda)=m$, $wt(\lambda)=0$,  $\lambda_{im},\lambda_{jl}\in S_\lambda$ with $j=\br(\lambda)$}.\end{array}\right.$$
Then $\lambda^\downarrow$ is the comb orbit representative of $U_{(m)}\lambda U_{(m)}$.

\begin{theorem} \label{InterpolatingRestriction} Let $\lambda\in  \cZ_{(m-1)}$ and $k=\cend(\lambda)$.  Then
\begin{equation}\label{RestrictionRule} \Res^{U_{(m-1)}}_{U_{(m)}}(\chi^\lambda)=\left\{\begin{array}{ll} 
\chi^{\lambda^\downarrow}, & \text{if $k=m$ or $\wt(\lambda)=0$,}\\
q\chi^{\lambda^\downarrow}, & \text{if $k>m$, $\wt(\lambda)=1$, $\lambda_{mj}\neq 0$ for some $j>k$,}\\
\dd\sum_{t\in \FF_q}\chi^{\lambda^\downarrow+te_{mk}}, & \text{if $k>m$, $\wt(\lambda)=1$, $\lambda_{mj}=0$ for all $j>k$.}
\end{array}\right.\tag{$\ast$}
\end{equation}
\end{theorem}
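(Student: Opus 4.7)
The plan is to verify the identity (\ref{RestrictionRule}) by evaluating both sides at each superclass representative $u \in \cZ_{(m)}^\vee$ and matching the character values via Corollary \ref{PathCharacterFormula}. Since supercharacters are constant on superclasses and determined by their values, this pointwise check establishes the identity.

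The first step is a dimension check at $u = 1$. The posets $\cP_{(m-1)}$ and $\cP_{(m)}$ differ only in the single covering relation $1 < m$, so by the degree formula in Corollary \ref{PathCharacterFormula}(b) the ratio $\chi^\lambda(1)/\chi^{\lambda^\downarrow}(1)$ equals $q^\delta$, where $\delta \in \{0,1\}$ counts whether the pair $(1,m)$ contributes to the set of top/bottom corner pairs for $\lambda$. A short bookkeeping argument shows that $\delta=0$ precisely when $k = m$ (so $\lambda$ has no relevant corner weakly east of column $m$) or when $\wt(\lambda) = 0$ (so the corner seen from position $1 < m$ is already accounted for in $\cP_{(m)}$). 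This matches Case 1 exactly; in Cases 2 and 3 we have $\delta = 1$, accounting for the factor of $q$ (in Case 3, $q$ summands of equal dimension $\chi^{\lambda^\downarrow}(1)$).

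Next, by Corollary \ref{CharacterMultiplicative}, one may reduce to the case where $G_\lambda$ has a single connected component equal to $S_\lambda$, since components of $\lambda$ not meeting the first row sit in $\fkn_{(m)}^*$ unchanged and contribute identical factors to both sides. Likewise one may reduce $u$ to a single connected component using Lemma \ref{ClassMultiplicative}. The bulk of the work then becomes a case-by-case comparison of path compatibilities and baggage identities organized by how $S_\lambda$ sits relative to column $m$. In Case~1 with $\cend(\lambda) = m$, deleting the leftmost vertex of $S_\lambda$ to form $\lambda^\downarrow$ removes a vertex that no $u \in U_{(m)}$ can interact with, because the forced condition $u_{1m}=0$ prevents any left corner of $G_{u-1}$ from seeing $(i,m)$; the path compatibilities (PC1)--(PC4) and multiplicative factors of Corollary \ref{PathCharacterFormula}(c) are then preserved. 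In Case~1 with $\wt(\lambda)=0$, we have $\lambda^\downarrow=\lambda$, and the added southern/western edge in the drawing convention of Section~4.3 extends only to row $m+1$, so no new interaction with column $m$ appears when moving between $\cP_{(m-1)}$ and $\cP_{(m)}$. In Case~2 the entry $\lambda_{mj}$ with $j>k$ lies in row $m$ outside $S_\lambda$ and interacts with $u_{1m}$ in $\cP_{(m-1)}$; in $U_{(m)}$ this interaction disappears, yielding the factor of $q$ gain from collapsing the constraint $u_{1m}\lambda_{mj}=0$.

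The main obstacle is Case~3. Here the path $S_\lambda$ terminates at $(m,k)$ with row $m$ otherwise empty, and the restriction must be expanded as a sum over $t \in \FF_q$ corresponding to all reattachments $\lambda^\downarrow + t e_{mk}$; note that for $t \neq 0$ this element extends the path $S_{\lambda^\downarrow}$ by a vertex, while for $t = 0$ it equals $\lambda^\downarrow$, so the summands have genuinely different path structures that must be handled separately. The proof of $\sum_{t \in \FF_q}\chi^{\lambda^\downarrow + te_{mk}}(u) = \chi^\lambda(u)$ for $u \in \cZ_{(m)}^\vee$ will use the orthogonality identity
\[
\sum_{t \in \FF_q}\theta(t x) \;=\; q\cdot[x=0]
\]
applied to the factor in $\chi^{\lambda^\downarrow + te_{mk}}(u)$ produced by the baggage condition (\ref{HorizontalTouch}) at the newly attached vertex. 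The $[x=0]$ condition that emerges will match exactly the baggage equation $\bag(x_j)\bag(y_k) = 1$ that controls nonvanishing of $\chi^\lambda(u)$ in $U_{(m-1)}$. Verifying this alignment, and checking that the prefactors $\theta(\lambda(u-1))$ and the multiplicative constants in Corollary \ref{PathCharacterFormula}(c) agree on both sides, is the technical heart of the argument.
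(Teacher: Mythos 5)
Your overall plan --- evaluating both sides of (\ref{RestrictionRule}) pointwise on superclass representatives of $U_{(m)}$, decomposing by connected components, and invoking the path-representative character formula together with a character-sum orthogonality argument in the third case --- is in the same spirit as the paper's proof. But there are two substantive gaps that prevent this from being a working argument as written.

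First, you cite Lemma~\ref{ClassMultiplicative} to reduce $u$ to a single connected component. That lemma is a column-by-column statement valid only for \emph{comb} representatives (the remark after it says explicitly that it fails for path representatives), and in any case it does not assert multiplicativity over the components of $G_{u-1}$. The reduction you want --- factoring $\chi^\lambda(u)$ over the components of $u$ --- comes from Lemma~\ref{DecomposingCharacterConjugacyComponents}, which is what the paper actually invokes. This matters beyond citation hygiene: the paper's proof case where $u_{1k'}$ and $u_{mk'}$ are both nonzero is handled by \emph{switching to comb representatives and then applying} Lemma~\ref{ClassMultiplicative} to reduce $u$ further, down to a two-entry single column. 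You propose to stay entirely in path representatives, but then you have no tool to perform that further reduction: for a path representative $u$, the component $S_u$ through row~$1$ and row~$m$ can contain arbitrarily many vertices, and evaluating every summand $\chi^{\lambda^\downarrow + t e_{mk}}(u)$ on such a $u$ is a genuinely harder computation that your outline does not address.

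Second, the ``technical heart'' you flag --- that the orthogonality identity $\sum_{t\in\FF_q}\theta(tx)=q\,[x=0]$ extracts precisely the baggage constraint $\bag(x_j)\bag(y_k)=1$, with the correct prefactors --- is asserted, not verified. The subtlety is that the $t=0$ and $t\neq 0$ summands have different path topology (as you note), so they contribute \emph{through different clauses} of Corollary~\ref{PathCharacterFormula}(c) with different normalizing powers of $q$; it is not a priori clear, and you do not show, that after these differing normalizations the $t$-dependence is still a single additive character in $t$. In the paper's argument the analogous delicate computation is carried out in comb coordinates via Theorem~\ref{TriangleCharacterFormula}, where the $t$-dependence enters linearly inside a single $\theta(\cdot)$ factor by inspection of~(\ref{SpecificM}); that is the structural reason the paper moves to combs there. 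Until you either reproduce that comb-side computation or verify directly in path coordinates that the varying $q$-powers across $t=0$ and $t\neq 0$ cancel and leave a bona fide character sum, the argument has a hole exactly where the work is.

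As a lesser point, your dimension check at $u=1$ is plausible but not completed; the ratio $\chi^\lambda(1)/\chi^{\lambda^\downarrow}(1)$ also depends on whether $\lambda^\downarrow$ equals $\lambda$ or drops a vertex, and the ``short bookkeeping argument'' should track this alongside the change from $\cP_{(m-1)}$ to $\cP_{(m)}$.
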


\begin{proof}  Using Lemma \ref{DecomposingCharacterConjugacyComponents}, we may assume that $G_{u-1}$ has one connected component.  
We will use Theorem \ref{PathCharacterFormula}  and Theorem \ref{TriangleCharacterFormula} to compare the appropriate character values.  We split the proof into three cases.

\begin{case} The element $u\in U_{(m)}$ satisfies $u_{mk'}=0$ for all $k'>m$.
\end{case}
First, note by inspection that
$$\Res_{U_{(m)}}^{U_{(m-1)}}(\chi^\lambda)(1)=\left\{\begin{array}{ll} \chi^{\lambda^\downarrow}(1), & \text{if $k=m$ or $\wt(\lambda)=0$,}\\
q\chi^{\lambda^\downarrow}(1), & \text{if $k>m$, $\wt(\lambda)=1$, $\lambda_{mj}\neq 0$ for some $j>k$,}\\
\dd\sum_{t\in \FF_q}\chi^{\lambda^\downarrow+te_{mk}}(1), & \text{if $k>m$, $\wt(\lambda)=1$, $\lambda_{mj}=0$ for all $j>k$.}
\end{array}\right.$$

Since by assumption row $m$ of $u-1$ is zero, the fact that $1\nleq m$ does not affect the value of $\chi^\lambda(u)$.  Thus, the only problematic case is the case $\cend(\lambda)=m$.  Here observe that if $\lambda^\downarrow=\lambda-\lambda_{im}e_{im}$, then $u_{im}\neq 0$ if and only if $\chi^\lambda(u)=0=\chi^{\lambda^\downarrow}(u)$.

\begin{case} The element $u\in U_{(m)}$ satisfies $u_{mk'}\neq 0$  for some $k'>m$, and $u_{1k'}=u_{mj}=0$ for all $k'>j>m$. 
\end{case}

Note that if $k=m$ or $\wt(\lambda)=0$, then
$$\Res_{U_{(m)}}^{U_{(m-1)}}(\chi^\lambda)(u)=\chi^{\lambda^\downarrow}(u).$$
Thus, it suffices to consider the cases where $\cend(\lambda)>m$ and $\wt(\lambda)=1$.  

Suppose $\cend(\lambda)>m$ and $\wt(\lambda)=1$ with $\lambda_{mj}\neq 0$ for some $j>\cend(\lambda)$.  If $k'<j$, then $\chi^\lambda(u)=0=q\chi^{\lambda^\downarrow}(u)$.  If $k'\geq j$, then  $\chi^\lambda(u)=q\chi^{\lambda^\downarrow}(u)$ follows from the proof of Claim 1.

Suppose $k=\cend(\lambda)>m$ and $\wt(\lambda)=1$ with $\lambda_{mj}= 0$ for all $j>k$.  If $k>k'$, then
$$\sum_{t\in \FF_q}\chi^{\lambda^\downarrow+te_{mk}}(u)=\chi^{\lambda^\downarrow+0e_{mk}}(u),$$
since the other summands all contradict (PC1).  Thus, in this case, 
$$\chi^\lambda(u)=\sum_{t\in \FF_q}\chi^{\lambda^\downarrow+te_{mk}}(u).$$  
If $k<k'$, then $\chi^{\lambda^\downarrow+te_{mk}}(u)=\chi^\lambda(u)$ for all $t\in \FF_q$, so 
$$\chi^\lambda(u)=\sum_{t\in \FF_q}\chi^{\lambda^\downarrow+te_{mk}}(u).$$  
If $k=k'$, then $\chi^\lambda(u)=0$, by (PC1).  On the other hand,
$$\sum_{t\in \FF_q}\chi^{\lambda^\downarrow+te_{mk}}(u)=\frac{\chi^{\lambda^\downarrow}(1)}{q^{\#\left\{\begin{array}{@{}c@{}}\ss \text{corners strictly}\\ \vspace{-.15cm} \ss \text{NE of $(m,k')$}\end{array}\right\}}}\sum_{t\in \FF_q} \theta(tu_{mk'})=0,$$  
as desired.  

\begin{case} The element $u\in U_{(m)}$ satisfies $u_{1k'},u_{mk'}\in \FF_q^\times$ for some $k'$.
\end{case}
For Case 3 we translate to comb representatives and use Theorem \ref{TriangleCharacterFormula}.  For comb representatives, we may assume that $u_{1k'},u_{mk'}$ are the only nonzero entries in $u-1$.  Define $u^{\uparrow}\in U_{(m-1)}$ by $u^\uparrow=u-u_{1k'}e_{1k'}$.  Note that $u^\uparrow$ is the representative for the superclass containing $u$ in $U_{(m-1)}$.  Thus, we will show
$$\Res_{U_{(m)}}^{U_{(m-1)}}(\chi^{\lambda[u^\uparrow,k']})(u^\uparrow)=\left\{\begin{array}{ll} \chi^{\lambda^\downarrow[u,k']}(u), & \text{if $k=m$ or $\wt(\lambda)=0$,}\\
q\chi^{\lambda^\downarrow[u,k']}(u), & \text{if $k>m$, $\wt(\lambda)=1$, $\lambda_{mj}\neq 0$ for some $j>k$,}\\
\dd\sum_{t\in \FF_q}\chi^{\lambda^\downarrow[u,k']+te_{ml}}(u), & \text{if $k>m$, $\wt(\lambda)=1$, $\lambda_{1l}\neq 0$, $\lambda_{mj}=0$, $j>k$.}\end{array}\right.$$
In the cases $\wt(\lambda)=0$ or $k=m$, the character value $\chi^{\lambda[u^\uparrow,k']}(u^\uparrow)=0$ if and only if at least one of the following conditions hold
\begin{enumerate}
\item[(a)] $\lambda_{ij}\neq 0$ for $1<j<k'$,
\item[(b)] $\lambda_{ml}\neq 0$ for some $l>k'$,
\end{enumerate}
if and only if $\chi^{\lambda^\downarrow[u,k']}(u)=0$.  The restrictions for these cases follow.  

Suppose $\cend(\lambda)>m$, $\wt(\lambda)=1$ and $\lambda_{mj}\neq 0$ for some $j>k=\cend(\lambda)$.  If $j>k'$, then $\chi^{\lambda[u^\uparrow,k']}(u^\uparrow)=0=\chi^{\lambda^\downarrow[u,k']}(u)$.  If $j\leq k'$, then $k<k'$, and $\chi^{\lambda[u^\uparrow,k']}(u^\uparrow)=0=\chi^{\lambda^\downarrow[u,k']}(u)$ by a similar argument as in the previous cases.

Suppose $\cend(\lambda)>m$ $\wt(\lambda)=1$ and $\lambda_{mj}= 0$ for all $j>k$.  Let $l$ be such that $\lambda_{1l}\neq 0$.  If $l<k'$, then $\chi^{\lambda^\downarrow[u,k']+te_{ml}}(u)=\chi^{\lambda^\downarrow[u,k']}(u)=q^{-1}\chi^{\lambda[u^\uparrow,k']}(u)$.  If $l=k'$, then
$\chi^{\lambda[u^\uparrow,k']}(u)=0=\chi^{\lambda^\downarrow[u,k']+te_{ml}}(u)$.  

Suppose $l>k'$.  If $\lambda_{jk'}\neq 0$ for some $1<j<k'$, then  $\chi^{\lambda[u^\uparrow,k']}(u)=0$. On the other hand, by Theorem \ref{TriangleCharacterFormula},
$$\sum_{t\in \FF_q}\chi^{\lambda^\downarrow[u,k']+te_{ml}}(u)=\frac{\chi^\lambda(1)}{q^{|C_{k'}|}}+\sum_{t\in \FF_q^\times}\frac{\chi^\lambda(1)}{q^{|C_{k'}|}}\theta\bigl(-\lambda_{jl}^{-1}\lambda_{jk'}(u_{1k'}\lambda_{1l}+u_{mk'}t)\big)=0.$$ 
If $\lambda_{jk'}=0$ for all $j<k'$, then
$$\sum_{t\in \FF_q}\chi^{\lambda^\downarrow[u,k']+te_{ml}}(u)=\chi^{\lambda^\downarrow[u,k']-u_{1k'}\lambda_{1l}u_{mk'}^{-1}e_{ml}}(u),$$
since, according to condition (1) of Theorem \ref{TriangleCharacterFormula}, all the other summands are zero.  However, $\chi^{\lambda^\downarrow[u,k']-u_{1k'}\lambda_{1l}u_{mk'}^{-1}e_{ml}}(u)=\chi^{\lambda[u^\uparrow,k']}(u)$, as desired.
\end{proof}

\subsection{A restriction rule for $U_n$}

For $\lambda\in \cS_{n}(q)$, and $i,k\in \ZZ_{\geq 1}$, let
\begin{equation}\label{RightProduct}
\lambda\ast_{i}\{k\}=\left\{\begin{array}{ll} 
\lambda, & \text{if $i=k$,}\\ 
q\lambda\ast_{i+1}\{k\}, & \text{if $\lambda_{il}\neq 0$ for some $l>k$,}\\
\lambda\big|_{\lambda_{ik}=0}\ast_{i+1} \{k\}, & \text{if $\lambda_{ik}\neq 0$,}\\
\dd\lambda\ast_{i+1} \{k\} +\sum_{t\in \FF_q^\times} \lambda\big|_{\lambda_{ik}=t\atop\lambda_{ij}=0}\ast_{i+1}\{j\}, & \text{if $\lambda_{ij}\neq 0$ for some $i<j<k$,}\\
\dd\lambda\ast_{i+1}\{k\}+\sum_{t\in \FF_q^\times}\lambda\big|_{\lambda_{ik}=t}, & \text{if $\lambda_{ij}=0$ for all $j>i$.}
\end{array}\right.
\end{equation}
We will extend this product to supercharacters by the convention
\begin{equation*}
\chi^\mu\ast_i \chi^{\{k\}} =\sum_{\lambda} c_{\mu k}^\lambda \chi^\lambda,\qquad\text{if} \qquad \mu\ast_i \{k\}=\sum_{\lambda}c_{\mu k}^\lambda\lambda.
\end{equation*}

\begin{corollary} \label{URestrictionRule}
For $\lambda\in \cS_n(q)$,
$$\Res_{U_{n-1}}^{U_n}(\chi^\lambda)=\left\{\begin{array}{ll} \chi^\lambda\ast_{1}\chi^{\{k\}}, & \text{if $\lambda_{1k}\neq 0$, for some $1<k$,}\\ \chi^\lambda, & \text{otherwise.}\end{array}\right.$$ 
\end{corollary}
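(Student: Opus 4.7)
The plan is to iterate Theorem~\ref{InterpolatingRestriction} along the subnormal chain $U_n = U_{(1)} \supseteq U_{(2)} \supseteq \cdots \supseteq U_{(n)} \cong U_{n-1}$, exploiting the transitivity of restriction
$$\Res^{U_n}_{U_{n-1}} = \Res^{U_{(n-1)}}_{U_{(n)}} \circ \Res^{U_{(n-2)}}_{U_{(n-1)}} \circ \cdots \circ \Res^{U_{(1)}}_{U_{(2)}},$$
and to verify that the composed formula matches the recursion~(\ref{RightProduct}). The ``otherwise'' clause is immediate: when $\lambda_{1k}=0$ for all $k>1$, the connected component $S_\lambda$ (containing a first-row vertex) is empty, so at each step $m=2,\ldots,n$ Theorem~\ref{InterpolatingRestriction} falls into Case~1 with $\wt(\lambda)=0$ and $\lambda^\downarrow=\lambda$, leaving $\chi^\lambda$ unchanged throughout the chain.

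For the main case, $\lambda_{1k}\neq 0$ for a unique $k$ (since $\lambda\in\cS_n(q)$), I would proceed by induction on $k-i$, the inductive claim being that $\ast_i\{k\}$ computes the iterated restriction down to $U_{(n)}$ applied to the appropriate intermediate representative. The base case $i=k$ uses the observation that once the step with $\cend(\lambda)=m$ has been processed (absorbing the first-row vertex via Theorem~\ref{InterpolatingRestriction}'s Case~1 at $k=m$), all remaining restrictions $U_{(m'-1)}\to U_{(m')}$ for $m'>k$ act trivially on the resulting supercharacter; so $\ast_k\{k\}=\lambda$ correctly returns the stabilized representative. The inductive step applies Theorem~\ref{InterpolatingRestriction} once and matches each of its cases to a branch of $\ast_i$: Case~2 yields the factor-$q$ branch ``$\lambda_{il}\neq 0$ for some $l>k$''; Case~1 with $k=m$ matches ``$\lambda_{ik}\neq 0$''; and Case~3 matches the branch ``row $i$ of $\lambda$ is zero'' via its sum $\sum_{t\in\FF_q}\chi^{\lambda^\downarrow+te_{mk}}$. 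The remaining branch ``$\lambda_{ij}\neq 0$ for some $i<j<k$'' handles the sub-case in which a previous Case-3 summation has introduced an auxiliary entry in row $i$, causing $\cend$ to shift from $k$ to $j$.

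The hard part will be verifying this fourth branch. One must track how the summation from Theorem~\ref{InterpolatingRestriction}'s Case~3 interacts with the existing entry $\lambda_{ij}$: the $t=0$ contribution retains $\cend=k$ (feeding into $\ast_{i+1}\{k\}$), while each $t\in\FF_q^\times$ contribution rearranges, via the comb/path translation~(\ref{PathToCombCharacter}) and the path-rep character formula of Corollary~\ref{PathCharacterFormula}, into a path representative with $\cend=j$ (feeding into $\ast_{i+1}\{j\}$). A careful case analysis verifying that this rearrangement respects the path-compatibility conditions (PC1)--(PC4), and that the resulting supercharacters have the right multiplicities, should complete the induction and yield the Corollary.
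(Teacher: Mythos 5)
Your proposal takes essentially the same route as the paper: the paper's own proof likewise iterates Theorem \ref{InterpolatingRestriction} along the chain $U_{(0)}=U_n\supset U_{(1)}\supset\cdots\supset U_{(n)}\cong U_{n-1}$ and then simply asserts that the recursion (\ref{RightProduct}) is the iterative version of the three cases of the theorem, with the last two branches of (\ref{RightProduct}) corresponding to the summation case according to whether row $m$ has a nonzero entry to the left of $\lc(\lambda)$ --- which is exactly the case-matching you carry out. One small correction: the entry $\lambda_{ij}$ that triggers the fourth branch of (\ref{RightProduct}) is an \emph{original} entry of $\lambda$ (the recursion never inserts an entry into row $i$ before step $\ast_i$ processes that row), not one introduced by an earlier Case-3 summation; the point is rather that the current step's summand $te_{ik}$ connects the first-row path to that pre-existing entry and shifts $\lc$ from $k$ to $j$, which is precisely the verification your final paragraph describes.
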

\begin{proof}
Note that for $1\leq m\leq n$, Theorem \ref{InterpolatingRestriction} implies
$$\Res^{U_{(m-1)}}_{U_{(m)}}(\chi^\lambda)=\left\{\begin{array}{ll} \chi^{\lambda}, & \text{if $\lc(\lambda)\neq m$ and $\wt(\lambda)=0$,}\\
\chi^{\lambda-\lambda_{im}e_{im}}, & \text{if $\lc(\lambda)=m$,} \\ q\chi^{\lambda}, & \text{if $\lc(\lambda)>m$, $\wt(\lambda)=1$, $\lambda_{mj}\neq 0$ for some $j>\lc(\lambda)$,}\\
\dd\sum_{t\in \FF_q}\chi^{\lambda+te_{mk}}, & \text{if $k=\lc(\lambda)>m$, $\wt(\lambda)=1$, $\lambda_{mj}=0$, for all $j>k$.}\end{array}\right.$$
The multiplication given by (\ref{RightProduct}) is an iterative version of the restrictions from $U_{(m-1)}$ to $U_{(m)}$, where the last two cases in (\ref{RightProduct}) correspond to the last case in the restriction, depending on whether there exists  $\lambda_{mj}\in \FF_q^\times$ for some $j<\lc(\lambda)$.
\end{proof}

\subsection{Examples}

\noindent \textbf{Example 1.} Consider the case $q=2$.  Then we may choose our representatives $\lambda\in \fkn_n^*$ and $u\in U_n$ to correspond to set partitions of $\{1,2,\ldots, n\}$.  For example, if 
$$\lambda=\{1\larc{}5\mid 2\larc{} 6\mid 3\larc{} 4\},$$
then
\begin{align*}
\Res^{U_6}_{U_{5}}(\chi^\lambda) &= \chi^{\{1\larc{}5\mid 2\larc{} 6\mid 3\larc{} 4\}}\ast_1\{5\}\\
						     &=\chi^{\{1\mid 5\mid 2\larc{} 6\mid 3\larc{} 4\}}\ast_2\{5\}\\
						     &=2\chi^{\{1\mid 5\mid 2\larc{} 6\mid 3\larc{} 4\}}\ast_3\{5\}\\
						     &=2\chi^{\{1\mid 5\mid 2\larc{} 6\mid 3\larc{} 4\}}\ast_4\{5\}+2\chi^{\{1\mid 2\larc{} 6\mid 3\larc{} 5\mid 4\}}\ast_4\{4\}\\
&=2\chi^{\{1\mid 5\mid 2\larc{} 6\mid 3\larc{} 4\}}\ast_5\{5\}+2\chi^{\{1\mid 2\larc{} 6\mid 3\larc{} 4\larc{} 5\}}\ast_5\{5\}+2\chi^{\{1\mid 2\larc{} 6\mid 3\larc{} 5\mid 4\}}\\
&=2\chi^{\{1\mid 5\mid 2\larc{} 6\mid 3\larc{} 4\}}+2\chi^{\{1\mid 2\larc{} 6\mid 3\larc{} 4\larc{} 5\}}+2\chi^{\{1\mid 2\larc{} 6\mid 3\larc{} 5\mid 4\}}\\
\end{align*}
Note that the final result gives the representative of $U_{n-1}$ as a submatrix of $U_n$ (since by construction the first row will always be zero).  To obtain a set partition of $n-1$, we would remove $1$ and renumber the rest of the entries $j\mapsto j-1$.

Alternatively, this algorithm may be viewed as a ``bumping algorithm", where we replace the $1$ by all other ``possibilities," suitably defined.  

\vspace{.25cm}

\noindent \textbf{Example 2.}  Linear characters of $U_n$ are those characters whose superclass representative satisfies, $i$ and $j$ are in the same part only if $i+1, i+2,\ldots, j-1$ are also in the part.  In this case, 
$$\chi^\lambda\ast_1\{j\}=\chi^\lambda.$$

\vspace{.25cm}

\noindent\textbf{Example 3.}  On the opposite extreme with have the case
$$\lambda=\{1\larc{} n\mid 2\mid 3\mid\cdots\mid n-1\}.$$
In this case,
$$\chi^\lambda\ast_1\{n\}=\sum_{\mu\in \cS_{n}(q),\atop\lambda_{ij}=0, 1\leq i<j<n}\chi^\mu.$$

\subsection{An alternate embedding of $U_{n-1}$}

The paper \cite{MT07} uses  a different embedding of $U_{n-1}$ into $U_n$ (obtained by removing the last column rather than the first row).  This alternate embedding gives a different restriction rule.  For $\mu\in \cS_n(q)$ and $j,l\in \ZZ_{\geq 1}$, let
$$\{j\} \ast_l \mu=\left\{\begin{array}{ll} \mu, & \text{if $j=l$,}\\ 
q (\{j\} *_{l-1} \mu), & \text{if there is $i<j$ with $\mu_{il}\neq 0$,}\\
\dd \{j\} \ast_{l-1} \mu\big|_{\mu_{jl}=0}, & \text{if $\mu_{jl}\neq 0$,}\\
\dd \{j\}\ast_{l-1}\mu+\sum_{t\in \FF_q^\times} \{k\} \ast_{l-1} \mu\big|_{\mu_{kl}=0\atop\mu_{jl}=t}, & \text{if there is $k>j$ with $\mu_{kl}\neq 0$,}\\
\dd \{j\} \ast_{l-1} \mu + \sum_{t\in \FF_q^\times} \mu\big|_{\mu_{jl}=t}, & \text{otherwise.}\end{array}\right.
$$
Then by symmetry arguments from \cite{MT07}, we obtain the following corollary for this alternate embedding of $U_{n-1}$ into $U_n$.

\begin{corollary} 
For $\lambda\in \cS_n(q)$,
$$\Res_{U_{n-1}}^{U_n}(\chi^\lambda)=\left\{\begin{array}{ll} \chi^{\{j\}} \ast_n \chi^\lambda, & \text{if $\lambda_{jn}\neq 0$, for some $j<n$,}\\ \chi^\lambda, & \text{otherwise.}\end{array}\right.$$ 
\end{corollary}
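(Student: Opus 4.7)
The plan is to deduce this corollary from Corollary \ref{URestrictionRule} via the natural anti-automorphism of $U_n$ that interchanges the two embeddings of $U_{n-1}$. Specifically, I would define $\phi\colon U_n\to U_n$ by $\phi(u)_{ij}=u_{n+1-j,\,n+1-i}$, the reflection across the anti-diagonal; equivalently $\phi(u)=J u^T J$ where $J$ is the reverse identity. A direct computation shows $\phi$ is an involutive anti-automorphism of $U_n$ and that $\phi(U_{(n)})$ is exactly the subgroup $\{u\in U_n\mid u_{jn}=0\text{ for all }j<n\}$, i.e., the alternate embedding of $U_{n-1}$ used in \cite{MT07}.

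Next I would check that $\phi$ is compatible with the supercharacter theory: the induced map $\lambda\mapsto\lambda^{\phi}$ on $\fkn_n^*$, defined by $\lambda^\phi_{ij}=\lambda_{n+1-j,\,n+1-i}$, sends two-sided $U_n$-orbits to two-sided $U_n$-orbits and sends the set $\cS_n(q)$ of path representatives to itself (just reflect the corresponding set-partition-type diagram along the anti-diagonal). From the supercharacter formula in \cite{DI06} one obtains $\chi^{\lambda}\circ\phi=\chi^{\lambda^{\phi}}$, so that the functors $\Res^{U_n}_{U_{(n)}}$ and $\Res^{U_n}_{\phi(U_{(n)})}$ are conjugate under $\phi$.

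With this in hand, Corollary \ref{URestrictionRule} applied to $\chi^{\lambda^{\phi}}$, followed by precomposition with $\phi$, yields a decomposition of $\Res^{U_n}_{U_{n-1}}(\chi^{\lambda})$ (where $U_{n-1}$ is the alternate embedding) as a sum of supercharacters indexed by the $\phi$-images of the terms appearing in $\chi^{\lambda^{\phi}}\ast_1\chi^{\{k\}}$. It then remains to verify that applying the involution $\mu\mapsto\mu^\phi$ to the recursion (\ref{RightProduct}) defining $\lambda\ast_i\{k\}$ produces exactly the recursion defining $\{j\}\ast_l\mu$: reflection swaps the role of columns and rows, turns ``end index $k$'' into ``start index $j=n+1-k$'', turns the innermost comparisons $i<j$ into $l<j$ after reindexing, and matches each of the five branches of (\ref{RightProduct}) with the corresponding branch of the definition of $\{j\}\ast_l\mu$.

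The main obstacle is purely bookkeeping: ensuring that under the anti-diagonal reflection, the index that plays the role of $\cend(\lambda)$ (the column on which the recursion acts) is sent to the appropriate row index in the dual recursion, and that the ``$\lambda_{mj}\neq 0$ for some $j>\lc(\lambda)$'' case (producing the factor of $q$) and the ``$\lambda_{mj}=0$ for all $j>\lc(\lambda)$'' case (producing the sum over $t\in\FF_q$) correctly correspond to their anti-diagonal counterparts. Once this case-by-case matching is carried out, no further combinatorics beyond Corollary \ref{URestrictionRule} and the existence of $\phi$ is needed, so this is essentially a transport-of-structure argument rather than a new calculation.
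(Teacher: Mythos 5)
Your proposal is correct and is essentially the paper's own argument: the paper proves this corollary in one line by appealing to ``symmetry arguments from \cite{MT07},'' which is exactly the anti-diagonal reflection $u\mapsto Ju^TJ$ interchanging the two embeddings of $U_{n-1}$ that you describe. Your write-up simply supplies the details (the compatibility of the reflection with superclasses, supercharacters, and the two recursions) that the paper leaves implicit.
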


In particular, unlike in the symmetric group representation theory, the decomposition of induced characters depends on the embedding of $U_{n-1}$ into $U_n$.

\end{document}